\def\updots{\mathinner{\mkern
1mu\raise 1pt \hbox{.}\mkern 2mu \mkern 2mu \raise
4pt\hbox{.}\mkern 1mu \raise 7pt\vbox {\kern 7 pt\hbox{.}}} }
\newcommand{\labelseparation}{0.5}
\newcommand{\labelsize}{\small}
\newcommand{\thickness}{0.8pt}
\newcommand{\ssize}{0.2}
\newcommand{\linew}{0.8pt}
\newcommand{\plus}{%
\psline[linewidth=\thickness](-\ssize,0)(\ssize,0)
\psline[linewidth=\thickness](0,-\ssize)(0,\ssize)
}
\newcommand{\minus}{%
\psline[linewidth=\thickness](-\ssize,0)(\ssize,0)
}
\newcommand{\bpic}{
\begin{center}
}
\newcommand{\epic}{
\endpspicture
\end{center}
}
\newcommand{\ds}{\displaystyle}
\newcommand{\z}{{\mathbf{z}_n}}
\newcommand{\zzz}{\mathbf{z}}
\newcommand{\p}{{\mathfrak{p}}}
\newcommand{\zz}{{\mathfrak{z}}}
\newcommand{\bz}[1]{{\mathbf{z}_{#1}}}
\newcommand{\tbz}[1]{{\tilde{\mathbf{z}}_{#1}}}
\newcommand{\refclass}[1]{{\mathcal{R}_{\delta}(#1)}}
\newcommand{\remesh}[2]{{\mathcal{R}_{#1}(#2)}}
\newcommand{\text}[1]{\mbox{\rm #1}}
\newcommand{\bx}[1]{{\mathbf{x}_{#1}}}
\newcommand{\SSigma}{\mathcal{S}}
\newcommand{\spl}{\SSigma_{r}(\z)}
\newcommand{\Spl}[2]{\SSigma_{#1}(#2)}
\newcommand{\Sspl}[2]{\widetilde{\SSigma}_{#1}(#2)}
\newcommand{\sspl}{\Sspl{r}{\z}}
\newcommand{\sti}{{i^\star}}
\newcommand{\ints}{s_*}
\newcommand{\sbe}{\tilde{s}}
\newcommand{\sJ}{J_\star}
\newcommand\UU{{\mathbf{u}}}
\newcommand\UUn{\mathbf{u}_n}
\newcommand\Chn{{\mathbf{t}_n}}
\newcommand\Ch{\mathbf{t}}
\newcommand\J {{\mathcal{I}}}
\renewcommand{\L}{\mathbb L}
\newcommand{\Lp}{\L_p}
\newcommand{\Poly}{\Pi}
\newcommand\w{{\omega}}
\newcommand{\C}{\mathbb C}
\newcommand{\N}{\mathbb N}
\newcommand{\R}{\mathbb R}
\newcommand{\W}{\mathbb W}
\newcommand{\scale}{\vartheta}
\def \meas{\mathop{\rm meas}}
\newcommand{\AC}{AC}
\newcommand{\esssup}{\mathop{\rm ess \; sup}}
 \newcommand{\TS}{\widetilde S}
\renewcommand{\phi}{\varphi}
\def\be  {\begin{equation}}
\def\ee  {\end{equation}}
\def\ba  {\begin{eqnarray}}
\def\ea  {\end{eqnarray}}
\def\baa {\begin{eqnarray*}}
\def\eaa {\end{eqnarray*}}
\newcommand{\ineq}[1]{(\ref{#1})}
\newcommand{\ie}{{\em i.e., }}
\newcommand{\eg}{{\em e.g.}}
\newcommand{\thm}[1]{Theorem~\ref{#1}}
\newcommand{\lem}[1]{Lemma~\ref{#1}}
\newcommand{\cor}[1]{Corollary~\ref{#1}}
\newenvironment{comment}[2]
{\bgroup\vspace{7pt}
\begin{tabular}{|p{4.4in}|}
\hline \qquad \bf \footnotesize Comment -- to be deleted in the final version \\
\hline
\quad\sl\footnotesize #1#2} {\\ \hline \end{tabular}
\vspace{7pt}\indent\egroup}
\newcommand{\bc}{\begin{comment}}
\newcommand{\ec}{\end{comment}}
\def\norm#1#2{\ensuremath{\left\|#1\right\|_{#2}}}
 \title{Constrained Spline Smoothing\thanks{The first author was supported in part by NSERC of Canada.}}
\author{K. Kopotun \thanks{Department of Mathematics, University of
Manitoba, Winnipeg, Manitoba, R3T 2N2, Canada
({\tt kopotunk@cc.umanitoba.ca}).}
\and
D. Leviatan\thanks{School of Mathematical Sciences, Raymond and
Beverley Sackler Faculty of Exact Sciences, Tel Aviv University,
Tel Aviv, 69978, Israel ({\tt leviatan@post.tau.ac.il}).}
\and
A. V. Prymak \thanks {Department of Mathematical and Statistical
Sciences, University of Alberta, Edmonton, T6G2G1 AB, Canada ({\tt
prymak@gmail.com}).}}
\date{\today}
\begin{document}

\maketitle

\begin{abstract}
Several results on constrained spline smoothing are obtained. In particular, we establish a general result, showing how one can constructively smooth any monotone or convex piecewise polynomial function (ppf) (or any $q$-monotone ppf, $q\geq 3$, with one additional degree of smoothness)   to be  of
minimal defect while keeping it close to the original function in the $\Lp$-(quasi)norm.
It is well known that approximating a function by ppf's of minimal
defect (splines) avoids introduction of  artifacts which
may be unrelated to the original function, thus it is always
preferable. On the other hand, it is usually easier to construct
constrained ppf's with as little requirements on smoothness as
possible.
Our results allow to obtain shape-preserving splines of
minimal defect with  equidistant or Chebyshev knots.
The validity of the corresponding Jackson-type estimates for
shape-preserving spline approximation is summarized, in particular
we show, that the $\Lp$-estimates, $p\ge1$, can be immediately derived from
the $\L_\infty$-estimates.
\end{abstract}

\begin{keywords} Splines, smoothing, minimal defect, moduli of smoothness, degree of approximation, Jackson type estimates.
\end{keywords}

\begin{AMS}
65D07, 65D10, 41A15, 41A29, 41A25, 26A15
\end{AMS}

\pagestyle{myheadings}
\thispagestyle{plain}
\markboth{K. Kopotun, D. Leviatan and A. Prymak}{Constrained Spline Smoothing}

\section{Introduction and the main results}

\subsection{Notation}
Let $\spl$ be the space of all piecewise  polynomial functions (ppf) of degree $r$ (order $r+1$) with the knots  $\z := (z_i)_{i=0}^n$, $-1 =: z_0 <z_1 < \dots <z_{n-1} < z_n := 1$. In other words, we say that $s\in\spl$ if, on each interval
$(z_{i}, z_{i+1})$, $0\leq i\leq n-1$, $s\in \Poly_{r}$, where $\Poly_{r}$ denotes the space of  algebraic polynomials of degree $\leq r$.
Also,  let
$\sspl :=\spl \bigcap \C^{r-1}$ be the
corresponding space of splines of minimal defect (highest smoothness).

As usual, $\Lp(J)$, $0< p\leq \infty$, denotes the space of all
measurable functions $f$ on an interval $J$ such that
$\|f\|_{\Lp(J)} <
\infty$, where $ \|f\|_{\Lp(J)} := \left( \int_J|f(x)|^p\,
dx\right)^{1/p}$ if $p<\infty$, and $ \|f\|_{\L_\infty(J)} :=
\esssup_{x\in J} |f(x)|$. For $\mu\in\N$, the space of all
$\mu$-times continuously differentiable functions on $J$ is denoted
by $\C^\mu(J)$. Also, $\C(J)$ and $\AC(J)$ denote the spaces of all
continuous and locally absolutely continuous  functions,
respectively. (Note that if $f\in\C(J)$, then $\norm{f}{\C(J)} =
\norm{f}{\L_\infty(J)}$.)  The Sobolev space is defined by
$\W_p^r(J) := \left\{ f\in\Lp(J) \mid f^{(r-1)}\in \AC(J) \;
\mbox{\rm and}\; f^{(r)} \in \Lp(J) \right\}$.

 For $k\in\N_0$, define
\[\Delta_h^k(f,x, J):=\left\{
\begin{array}{ll} \ds
\sum_{i=0}^k  {k \choose i}
(-1)^{k-i} f(x-kh/2+ih),&\mbox{\rm if }\, x\pm kh/2  \in J \,,\\
0,&\mbox{\rm otherwise},
\end{array}\right.\]
The $k$-th modulus of smoothness of $f\in \Lp(J)$ is defined by
\[
\omega_k(f,t,J)_p:=\sup_{0<h\le
t}\norm{\Delta_h^k(f,\cdot,J)}{\Lp(J)},
\]
and the Ditzian-Totik moduli of
smoothness is
\[
\omega_k^\phi(f,t)_p:=\sup_{0<h\le
t}\norm{\Delta^k_{h\phi(\cdot)}(f,\cdot)}{\Lp[-1,1]} \,,
\]
where  $\phi(x):=\sqrt{1-x^2}$.

The set of $q$-monotone functions on $J$ is denoted by
$\Delta^q(J)$. Recall that $f\in\Delta^q(J)$ if the divided
differences $[f; t_0, \dots, t_q]$ of order $q$ of $f$ are
nonnegative for all choices of $(q + 1)$ distinct points
$t_0,\dots,t_q$ in $J$. If f is continuous, then $f\in\Delta^q(J)$
if $\Delta^q_h(f,x,J)
\geq 0$ for all $x\in J$ and $h>0$. It is well known that, for $q\ge 2$ and an open interval $J$,
$f\in\Delta^q(J)$ if and only if $f^{(q-2)}$ exists and is convex on
 $J$.

The error of   unconstrained approximation of $f$ from a set $U$ is
denoted by
\[
E(f,U)_p:=\inf_{u\in U}\norm{f-u}{\Lp(J)},
\]
and the error of $q$-monotone approximation (\ie approximation of $f$ by $q$-monotone elements of $U$) is
\[
E^{(q)}(f,U)_p:=E(f,U\cap\Delta^q(J))_p.
\]

 Throughout this paper, we also use the notation $\norm{f}{p} := \norm{f}{\Lp[-1,1]}$, $\Delta^q := \Delta^q[-1,1]$, $|J| := \meas(J)$, and
 \[
\omega_k(f,J)_p:=\omega_k(f,|J|,J)_p
\quad\text{and}\quad
\omega_k(f,t)_p:=\omega_k(f,t,[-1,1])_p.
\]

Given a partition $\z := (z_i)_{i=0}^n$, $-1 =: z_0 <z_1 < \dots
<z_{n-1} < z_n := 1$, we say that partition $\tbz{m} = (\tilde
z_i)_{i=0}^m$ is a {\em $\delta$-remesh} of $\z$ if, for each $0\leq
j\leq n-1$,
\[
\max\left\{ \tilde z_{i+1}-\tilde z_i \; \mid  \; [\tilde z_i, \tilde z_{i+1}] \cap (z_j, z_{j+1}) \ne \emptyset \right\} \leq \delta
\min_{\nu=j-1, j, j+1} \left|z_{\nu+1}-z_\nu  \right|
\]
with $z_{-1}$ and $z_{n+1}$ defined to be (in this definition only!)
$-\infty$ and $+\infty$, respectively. In other words,  the largest
interval $[\tilde z_i, \tilde z_{i+1}]$ intersecting $(z_j,
z_{j+1})$ should have the length at most $\delta$ times the length
of $[z_j, z_{j+1}]$ or the lengths of (one or two) intervals
adjacent to $[z_j, z_{j+1}]$ whichever is smaller. The class of all
$\delta$-remeshes of $\z$ is denoted by $\refclass{\z}$.

Clearly, $m$, $n$ and $\delta$ are not independent. The smallest  $m$ such that there is $\tbz{m}$ in $\refclass{\z}$  is determined not only by
$n$ and $\delta$ but also by the scale of the partition $\z$ (see \ineq{ratio}).

It is well known that it is easier and less costly to obtain a good
piecewise approximation to a function $f$ than to assure, at the
same time, the maximum smoothness of the approximating ppf's. On the
other hand, replacing $f$ by a ppf with minimal defect (spline)
avoids introduction of  artifacts which are unrelated to
the original function, and may effect and complicate the problem one
deals with. Thus, given two partitions $\z$ and $\tbz{m}$ with the
only requirement that $\tbz{m}$ is somewhat denser than $\z$ (more
precisely, $\tbz{m}$ is an {\em arbitrary} $\delta$-remesh of $\z$),
we show in this paper, how to smooth a general ppf on $\z$, to a
spline of maximum smoothness on $\tbz{m}$, while staying close to
the original ppf and preserving  its shape characteristics.
We prove that such a $\delta>0$ exists, that splines on an arbitrary
$\delta$-remesh of $\z$ may be constructed to satisfy the above
requirements. In particular, as  an illustration of these general
results, we consider two special kinds of partitions of $[-1,1]$
which are important in applications:
 the {\em uniform} partition $\UU_n :=(-1+2j/n)_{j=0}^n$
and the {\em Chebyshev} partition $\Ch_n :=(-\cos(j\pi/n))_{j=0}^n$. 

The following are some properties of classes $\refclass{\z}$.
\begin{itemize}
\item For any partition $\z$ and $0< \delta_1 \leq \delta_2$,
$\remesh{\delta_1}{\z} \subset \remesh{\delta_2}{\z}$.
\item If a partition $\zzz_{kn}^*$ is a refinement of the partition $\z$ obtained by subdividing each interval
in $\z$ into $k$ equal subintervals, and $\tilde\zzz_m \in \remesh{\delta}{\z}$, then
$\tilde\zzz_m \in \remesh{k\delta}{\zzz_{kn}^*}$.
\item If $0<\delta<1$ and $\tilde\zzz_m \in \remesh{\delta}{\z}$, then any interval
in $\tilde\zzz_m$ is contained in the union of at most two intervals in $\z$.
\item For any $m\geq n/\delta$, $\UU_{m} \in \refclass{\UU_n}$.
\item For any $m\geq \max\{25/\delta, 1\} n$, $\Ch_{m} \in \refclass{\Ch_n}$.
\end{itemize}

\subsection{Constrained smoothing}

Let $\z:=\{z_0, \dots, z_n | -1=:z_0<z_1<\cdots<z_n:=1\}$  be a partition of
$[-1,1]$, and extend the notation by setting 
$z_j := z_0$, $j<0$, and $z_j := z_n$, $j>n$.
We denote the scale of the partition $\z$ by
\be\label{ratio}
 \scale(\z) := \max_{0\le j\le n-1}\frac{|J_{j\pm1}|}{|J_j|} \, ,
\ee
where $J_j := [z_{j},z_{j+1}]$. We also denote $\J_j := [(z_{j-1}+z_{j})/2,(z_j+z_{j+1})/2]$.

The following is our main result on constrained spline smoothing.

\begin{theorem} \label{smoothing}
Let $q\in\N$, $r\in\N$, and let $\z = (z_i)_{i=0}^n$ be a partition
of $[-1,1]$. There is a constant $\delta=\delta(q,r)$ such that for
each $s\in
\Spl{q+r}{\z}\cap \Delta^q$  such that
\begin{equation}
  \label{cq-1}
  s\in \C^{q-1}[-1,1],
\end{equation}
and  any  $\tbz{m}\in\refclass{\z}$ (\ie $\tbz{m}$ is a
$\delta$-remesh of $\z$), there exists a spline   $\tilde s
\in
\Sspl{q+r}{\tbz{m}}
\cap \Delta^q$  satisfying
\[
\norm{s-\tilde s}{\Lp(\J_j)}
\le c(p,q,r) \omega_{q+r+1}(s,\J_j)_p\, , \; 0\leq j\leq n \,,
\]
for all $0<p\le\infty$. Moreover, the construction of $\tilde s$ does not
depend on $p$.
\end{theorem}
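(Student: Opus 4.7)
The plan is to define $\tilde s$ by leaving $s$ unchanged away from the interior breakpoints $z_1,\dots,z_{n-1}$ of $\z$, and, near each such $z_j$, replacing $s$ on a narrow window $T_j$ consisting of several consecutive $\tbz{m}$-intervals by a smoother spline piece. First I would fix $\delta=\delta(q,r)$ small enough that, by the $\delta$-remesh property, at least $2(q+r)$ consecutive $\tbz{m}$-intervals fit inside a window $T_j\subset\J_j$ straddling $z_j$; for $\delta<1/2$, distinct windows are pairwise disjoint, so the construction decomposes into independent local problems, each centred at an individual $z_j$ and with Hermite interface data inherited from $s$ itself.

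On each such $T_j=[a_j,b_j]$, the input $s$ is a ppf of degree $q+r$ whose only discontinuities in derivatives of order $q,\dots,q+r$ occur at the single interior point $z_j$, while $s$ is a polynomial on neighbourhoods of $a_j$ and $b_j$. I would seek $\tilde s|_{T_j}\in\Sspl{q+r}{\tbz{m}\cap T_j}$ that Hermite-matches $s$ in $C^{q+r-1}$ at both endpoints $a_j$ and $b_j$, so that the assembled global $\tilde s$ lies in $\Sspl{q+r}{\tbz{m}}$ and coincides with $s$ outside $\bigcup_j T_j$. The local $C^{q+r-1}$-spline space on $T_j$ has dimension $(q+r+1)+K_j$, where $K_j$ is the number of interior $\tbz{m}$-knots in $T_j$; for $\delta$ small, this comfortably exceeds the $2(q+r)$ Hermite conditions, leaving $K_j-(q+r-1)$ free parameters to spend on enforcing $q$-monotonicity.

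The crux is preserving $q$-monotonicity. Since $s\in C^{q-1}\cap\Delta^q$, the function $h:=s^{(q-1)}$ is a $C^0$ nondecreasing ppf of degree $r+1$, and the requirement $\tilde s\in C^{q+r-1}\cap\Delta^q$ is equivalent to $\tilde h:=\tilde s^{(q-1)}$ being a $C^r$ nondecreasing spline. The problem thus reduces to monotone spline smoothing of $h$ on $T_j$ subject to $C^r$ Hermite endpoint data. I would start from any $C^r$ Hermite interpolant $h_0$ of $h$ and correct it by $\sum_\nu\alpha_\nu B_\nu$, where the $B_\nu$ are nonnegative $C^r$ bumps with pairwise disjoint supports inside $T_j$, the coefficients $\alpha_\nu$ being tuned to enforce $(h_0+\sum_\nu\alpha_\nu B_\nu)'\ge 0$. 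The magnitudes $|\alpha_\nu|$ are controlled by the jumps of $s^{(k)}$ at $z_j$ for $q\le k\le q+r-1$, which, by a Whitney-type equivalence for ppf's with a single interior breakpoint, are comparable to $|\J_j|^{-k-1/p}\,\omega_{q+r+1}(s,\J_j)_p$. Assembling $\tilde s$ by antidifferentiating $\tilde h$ $q-1$ times with initial data matched to $s$, and estimating in $\Lp(\J_j)$, yields the claimed bound; since the construction uses only Hermite and jump data of $s$, it is manifestly independent of $p$. The main obstacle is engineering the nonnegative correction so that $\tilde h$ stays nondecreasing throughout $T_j$ without inflating the $\Lp$ error beyond $c\,\omega_{q+r+1}(s,\J_j)_p$; this is precisely where $\delta$ must be chosen sufficiently small, so that enough disjoint bumps fit between the Hermite boundary data to absorb every jump of $h$ at $z_j$.
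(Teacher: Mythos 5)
Your global strategy coincides with the paper's: localize to a window around each interior knot $z_j$, pass to $s^{(q-1)}$ (equivalently $s^{(q)}$) so that $\Delta^q$ becomes monotonicity (nonnegativity), smooth locally while matching $C^{q+r-1}$ data of $s$ at the window ends, and control the error by the jumps of $s^{(k)}$ at $z_j$ via the Whitney-type estimate (\lem{klp}). The gap is in the one step that carries all the difficulty: forcing the corrected spline to be $q$-monotone. You propose to take a $C^r$ Hermite interpolant $h_0$ of $h=s^{(q-1)}$ and add nonnegative bumps $\sum_\nu\alpha_\nu B_\nu$ supported strictly inside $T_j$ so that $(h_0+\sum_\nu\alpha_\nu B_\nu)'\ge0$. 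But any correction $g$ supported inside $T_j$ has $\int_{T_j}g'=0$: wherever you raise the derivative you must lower it by the same total amount elsewhere, and you can only afford to lower it where the derivative is already bounded below by a positive quantity of the right size. Nothing in your argument produces such a region. If, say, $s^{(q)}$ vanishes identically to the left of $z_j$ and is small near $z_j$ on the right, there is no slack near the breakpoint; and ``choosing $\delta$ small so that enough disjoint bumps fit'' does not help --- the obstruction is not the number of free parameters (your dimension count handles only the equality constraints, not the inequality $\tilde h'\ge0$), but the absence of a guaranteed region of quantitative positivity.

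The paper supplies exactly this missing ingredient. First, \lem{positive_piece}: since $S=s^{(q)}$ is a nonnegative polynomial of degree $\le r$ on each side of $z_j$, Markov's inequality forces $S\ge c_1(r)\,\eta$, with $\eta:=\norm{p_1-p_2}{\infty}$, on an interval $I$ whose length is a fixed fraction ($\ge 1/(4r^2)$) of the window; this is the only place where the construction can afford to ``come back down'' to $s$. Second, \lem{glue} smooths $S$ without ever going negative by gluing $p_1$ up to the dominating polynomial $p_1+\eta$ and then down to $p_2$ via Beatson's lemma (\lem{beatson}), so nonnegativity of the smoothed $q$-th derivative is automatic; the price is a drift of size $O(\eta\delta)$ in $s$ itself, which is removed by a second Beatson gluing performed on $I$, where the induced perturbation of $s^{(q)}$ --- of size $O(\eta\delta)$ by Markov on mesh intervals of length comparable to $|I|$ --- is dominated by $c_1\eta$ once $\delta\le c_1/c_3$. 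Note also that the paper takes the whole cell $\J_j$ as the window, so the region actually modified by the first gluing has relative length $O(\delta)$ and a genuine small parameter is available; your window of a bounded number of fine-mesh intervals leaves no such parameter with which to beat $c_1\eta$. Without \lem{positive_piece} (or an equivalent quantitative positivity statement) and a correct placement of the ``downward'' part of the correction, your bump construction cannot be completed as described.
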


The proof of Theorem 1.1 (as well as the proof of Theorem 1.2 below)
is postponed until Section 3.

Recall  that,  for any $q\geq 2$, $\Delta^q\subset \C^{q-2}(-1,1)$,
\ie any $q$-monotone function is in $\C^{q-2}$. Hence, the
smoothness provided by shape itself does not guarantee the
applicability of the above result, one needs to assume/gain one
additional smoothness degree.

It turns out that for $q=1$ and $q=2$ the gain of this additional degree
of smoothness is not difficult (see section~\ref{cms}), and so we get the following stronger result in the
case for $q\leq 2$.

\begin{theorem}
\label{c12}
Let $q=1$ or $2$, $r\in\N$, and let $\z = (z_i)_{i=0}^n$ be a partition of $[-1,1]$. There is a
constant $\delta=\delta(r)$ such that for each $s\in
\Spl{q+r}{\z}\cap \Delta^q$
and any $\tbz{m}\in\refclass{\z}$,
there exists a spline   $\tilde s \in \Sspl{q+r}{\tbz{m}}
\cap \Delta^q$  satisfying
\[
\norm{s-\tilde s}{\Lp(\J_j)}
\le c(p,r) \omega_{q+r+1}(s,\J_j)_p\, ,\; 0\leq j\leq n\,,
\]
for all $0<p\le\infty$.
\end{theorem}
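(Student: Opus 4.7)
The plan is to reduce \thm{c12} to \thm{smoothing} by a preliminary step that raises the smoothness of $s$ by one order while preserving $q$-monotonicity, keeping $s$ close to the modification locally, and keeping the modulus of smoothness of the modified function comparable to the original. Concretely, we aim to construct
\[
s^* \in \Spl{q+r}{\mathbf{z}^*} \cap \Delta^q \cap \C^{q-1}[-1,1]
\]
on a suitable refinement $\mathbf{z}^*$ of $\mathbf{z}$ satisfying
\[
\norm{s-s^*}{\Lp(\J_j)} \le c(p,r)\,\omega_{q+r+1}(s,\J_j)_p
\quad\text{and}\quad
\omega_{q+r+1}(s^*,\J_j)_p \le c(p,r)\,\omega_{q+r+1}(s,\J_j)_p.
\]
Once this is done, \thm{smoothing} applied to $s^*$ on the refined partition yields $\tilde s \in \Sspl{q+r}{\tbz{m}} \cap \Delta^q$ with $\norm{s^*-\tilde s}{\Lp(\J_j)} \le c\,\omega_{q+r+1}(s^*,\J_j)_p$, and the triangle inequality produces the desired bound.

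The smoothing step is local. For each interior knot $z_j$, insert two new knots $z_j^\pm := z_j \pm \eta_j$ with $\eta_j$ proportional to $\min\{|J_{j-1}|,|J_j|\}$, and modify $s$ only on $[z_j^-,z_j^+]$. For $q=1$, $s$ is monotone and may have an upward jump at $z_j$; the linear interpolant between $s(z_j^-)$ and $s(z_j^+)$ is monotone, of degree $\le 1+r$, and makes $s^*$ continuous at the new knots. For $q=2$, $s$ is continuous but $s'$ may jump at $z_j$; the middle piece must be convex and match $s$ and $s'$ at $z_j^\pm$, which can be accomplished by a single polynomial of degree $q+r$ when $r\ge 1$, or by using the already-present knot $z_j$ as an extra breakpoint inside $[z_j^-,z_j^+]$ and fitting two convex quadratic pieces when $r=0$. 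Outside $\bigcup_j[z_j^-,z_j^+]$ we set $s^* \equiv s$.

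Choosing $\eta_j$ proportional to local interval lengths makes $\mathbf{z}^*$ scale-equivalent to $\mathbf{z}$, and the remesh properties listed in Section~1.1 (in particular, the behavior under roughly uniform subdivisions) yield a constant $\delta(r)>0$ such that every $\tbz{m}\in\refclass{\mathbf{z}}$ is automatically a $\delta(q,r)$-remesh of $\mathbf{z}^*$, where $\delta(q,r)$ is the constant furnished by \thm{smoothing}. This validates the application of \thm{smoothing} to $s^*$ on the given $\tbz{m}$.

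The main obstacle is establishing the two local moduli bounds. The perturbation $s-s^*$ is supported in the thin corridor of length $\sim\eta_j$ around $z_j$, and its $\Lp$-norm is controlled by the jump $\alpha_j$ of $s$ (for $q=1$) or of $s'$ (for $q=2$) at $z_j$ times $\eta_j^{1/p}$. The crucial reverse inequality is that such a jump forces $\omega_{q+r+1}(s,\J_j)_p \ge c\,\alpha_j\,|\J_j|^{1/p}$: indeed, for a step $h \sim |\J_j|/(q+r+1)$ and $x$ in a subinterval of measure $\sim h$ containing $z_j$ in its interior, the difference $\Delta_h^{q+r+1}(s,x,\J_j)$ reduces, as $h\to 0$, to an alternating combination of one-sided values whose coefficients do not cancel, yielding a contribution of order $\alpha_j$. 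The $q=2$ case is handled analogously for $s'$ and then transferred to $s$ by absorbing an extra difference. The companion bound $\omega_{q+r+1}(s^*,\J_j)_p \le c\,\omega_{q+r+1}(s,\J_j)_p$ then follows from the triangle inequality for moduli applied to $s = s^* + (s-s^*)$, once the pointwise/$\Lp$ control on $s-s^*$ is in hand.
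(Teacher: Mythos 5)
Your overall architecture is exactly the paper's: pre-smooth $s$ on a local refinement of $\z$ to gain membership in $\C^{q-1}$ while preserving $\Delta^q$ and staying within $c\,\omega_{q+r+1}(s,\J_j)_p$, then invoke \thm{smoothing} on the refined partition (checking the remesh compatibility) and finish with the triangle inequality. The remesh bookkeeping and the ``companion bound'' via $\omega_{q+r+1}(s^*,\J_j)_p\le \omega_{q+r+1}(s,\J_j)_p+c\|s-s^*\|_{\Lp(\J_j)}$ are fine, and your ``reverse inequality'' (a jump of $s^{(q-1)}$ at a knot is dominated by the local modulus) is precisely the content of \lem{klp}.

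The genuine gap is in the construction of $s^*$ on the corridor $[z_j-\eta_j,z_j+\eta_j]$: you \emph{replace} $s$ there by a low-degree interpolant, and this cannot deliver the error bound $\omega_{q+r+1}$ when $r\ge1$. For $q=1$, take $s(x)=(x+1)^2+\e\,\mathrm{sign}(x-z_j)$ near $z_j$ with both adjacent intervals of length $L$: then $\omega_{r+2}(s,\J_j)_\infty\le c(r)\e$ (the smooth part is a polynomial of degree $2\le r+1$), but the straight line through $s(z_j\pm\eta_j)$ deviates from $s$ by about $\eta_j^2\sim L^2$, which is not $O(\e)$ when $\e\ll L^2$. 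The point is that the linear interpolant discards the degree-$(q+r)$ structure of the existing pieces, so its error is governed by $\omega_2$, not $\omega_{q+r+1}$. The $q=2$ version has the same defect and an additional one: the degree-$(q+r)$ polynomial matching $s,s'$ at both corridor endpoints need not be convex. For $r=1$ it is the unique cubic Hermite interpolant, and with $s\equiv 0$ to the left of $z_j$ and $s(x)=\e(x-z_j)+(x-z_j)^3$ to the right, one computes that its second derivative is negative at an endpoint whenever $\e<3\eta_j^2$, so convexity fails for small jumps. Both defects are repaired by doing what \lem{lemcms} does: keep $z_j$ as a knot and \emph{add} to the existing pieces explicit one-signed corrections (quadratics $a_i,b_i$ with nonnegative leading coefficients proportional to the jump of $s'$ for $q=2$, their derivatives for $q=1$); this manifestly preserves $\Delta^q$, achieves $\C^{q-1}$, and has size controlled by the jump alone, hence by $\omega_{q+r+1}(s,\cdot)_p$ via \lem{klp}. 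With that substitution your proof closes; as written, the corridor construction fails.
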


Note that an analog of \thm{c12} is also valid (and is actually simpler) in the case $r=0$ (see \lem{lemcms} and \cor{corms}).

In  case the partitions $\z$ and $\tbz{m}$ are either both uniform or both Chebyshev, \thm{smoothing} can be restated as follows.

\begin{corollary} \label{c13}
Let $q\in\N$, $r\in\N$, and let $\z = (z_i)_{i=0}^n$ denote either
$\UU_n$ or $\Ch_n$. There is a constant $m_0=m_0(q,r)$ such that for
each $s\in
\Spl{q+r}{\z}\cap  \Delta^q  \cap \C^{q-1}[-1,1]$,
and any $m\geq m_0 n$,
there exists a spline   $\tilde s \in \Sspl{q+r}{\bz{m}} \cap \Delta^q$, where
$\bz{m}$ is either $\UU_m$ or $\Ch_m$, respectively,
 satisfying
\[
\norm{s-\tilde s}{\Lp(\J_j)}
\le c(p,q,r) \omega_{q+r+1}(s,\J_j)_p\, ,\; 0\leq j\leq n\,,
\]
for all $0<p\le\infty$.
\end{corollary}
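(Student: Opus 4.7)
The plan is to derive Corollary \ref{c13} as a direct specialization of Theorem \ref{smoothing}. The only work is to verify that the two concrete families of partitions $\UU_n$ and $\Ch_n$ admit the required $\delta$-remeshes of the same type, which is precisely what the last two bullet items in the list of properties of $\refclass{\z}$ provide.

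First, I would invoke Theorem \ref{smoothing} to extract the constant $\delta=\delta(q,r)>0$ guaranteed there. Then, set
\[
m_0 := m_0(q,r) := \max\{25/\delta, 1\}.
\]
By the fourth listed property, $m\geq n/\delta$ (hence certainly $m\ge m_0 n$) implies $\UU_m \in \refclass{\UU_n}$; by the fifth listed property, $m\geq \max\{25/\delta,1\}\, n = m_0 n$ implies $\Ch_m \in \refclass{\Ch_n}$. Thus in either case $\bz{m}$ is a $\delta$-remesh of $\z$.

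With this in hand, Theorem \ref{smoothing} applied to $s\in \Spl{q+r}{\z}\cap\Delta^q\cap \C^{q-1}[-1,1]$ and to the remesh $\bz{m}$ directly produces a spline $\tilde s \in \Sspl{q+r}{\bz{m}}\cap \Delta^q$ satisfying
\[
\norm{s-\tilde s}{\Lp(\J_j)} \le c(p,q,r)\,\omega_{q+r+1}(s,\J_j)_p, \quad 0\le j\le n,
\]
for all $0<p\le\infty$, which is exactly the conclusion of the corollary.

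There is no real obstacle here: the only nontrivial point is that the constant $m_0$ from the remeshing lemma for Chebyshev partitions is larger (by the factor $25$) than the one for uniform partitions, so one must take the worse of the two in order to cover both cases with a single statement. No further analytic estimates are needed beyond those already in Theorem \ref{smoothing}.
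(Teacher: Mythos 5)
Your proposal is correct and is exactly the argument the paper intends: Corollary~\ref{c13} is obtained by feeding the listed remeshing properties of $\UU_m$ and $\Ch_m$ into Theorem~\ref{smoothing}, with $m_0$ chosen to cover the worse (Chebyshev) case. Nothing further is needed.
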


We remark that in view of \thm{c12}, in the case $q=1$ and $q=2$, the condition that $s$ is in $\C^{q-1}[-1,1]$ in \cor{c13} can be removed.

Finally, we remark that it is still an open question if the condition \ineq{cq-1} in the statement of \thm{smoothing} can be removed if $q\geq 3$.

\section{Applications: Jackson type estimates}

\subsection{{\bf Monotone and convex spline approximation: $p=\infty$}}

The following theorem is rather well known. Its positive part  follows from Whitney's inequality ($q=1$ and $1\leq k+\nu \leq 2$),  \cite[Lemma 2]{LeShejat98} ($q=1$, $\nu \geq 1$), \cite[Corollary 2.4]{LeShejat03} ($q=2$, $\nu\geq 2$), and \cite{Ko94} ($q=2$, $2\leq k+\nu\leq 3$).   The negative part follows from \cite{Shv} and \cite[p. 141]{She}.

\begin{theorem}[$p=\infty$] \label{thminf}
  Let $q=1$ or $2$, and $k, \nu\in\N_0$  be such that either $\nu\geq q$ or $q\leq k+\nu\leq q+1$ (see Fig.~1 and Fig.~2 below).
  Then, for every
 $f\in\Delta^q\cap \C^\nu[-1,1]$, $n\in\N$,  and any partition $\z = (z_i)_{i=0}^n$  of $[-1,1]$,
 there exists  $s\in \Spl{k+\nu-1}{\z}\cap \Delta^q$ such that
\[
\norm{f-  s}{\L_\infty (J_j)} \le c(k,\nu, \scale(\z)) |J_j|^\nu \omega_{k}(f^{(\nu)}, J_j)_\infty \,, \; 0\leq j\leq n-1\,.
\]
Moreover, this estimate is no longer true in general for $k$ and
$\nu$ which do not satisfy the above conditions. (This means that for
each partition $\z$ and any constant $c(k,\nu, \scale(\z))$, there exists a
 function $f\in\Delta^q\cap
\C^\nu[-1,1]$, such that the above estimate is invalid for any $s\in
\Spl{k+\nu-1}{\z}\cap
\Delta^q$.)
\end{theorem}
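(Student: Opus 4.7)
The plan is to dissect the admissible set of pairs $(k,\nu)$ into four subregions, obtain the positive estimate in each by invoking one of the cited local results, and then deduce the negative direction from the sharpness constructions of Shvedov and Shevchuk.

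For the positive direction I would proceed case by case. When $q=1$ and $\nu=0$, the only allowed values are $k\in\{1,2\}$; on each $J_j$ I would apply Whitney's theorem, producing either the constant $f(z_j)$ (if $k=1$) or the linear interpolant to $f$ at the endpoints of $J_j$ (if $k=2$), both of which glue into a globally monotone element of $\Spl{k-1}{\z}$ with error at most $\omega_k(f,J_j)_\infty$ on $J_j$. When $q=1$ and $\nu\ge1$, I would apply \cite[Lemma 2]{LeShejat98} on each $J_j$, which constructs a monotone polynomial piece of degree $k+\nu-1$ with the desired local bound $|J_j|^\nu\omega_k(f^{(\nu)},J_j)_\infty$ and a constant depending on $\scale(\z)$. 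When $q=2$ and $\nu\ge2$, \cite[Corollary 2.4]{LeShejat03} supplies the analogous convex ppf. The remaining region, $q=2$ with $\nu\in\{0,1\}$ and $2\le k+\nu\le3$, is covered by \cite{Ko94}. In every case the local pieces assemble into an $s\in\Spl{k+\nu-1}{\z}\cap\Delta^q$ because the cited constructions are designed to maintain $q$-monotonicity at the knots; the $\scale(\z)$-dependence of the constant reflects the local comparison of neighbouring interval lengths required in that compatibility argument.

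For the negative direction the complementary region consists of the triples $(q=1,\nu=0,k\ge3)$, $(q=2,\nu=0,k\ge4)$, and $(q=2,\nu=1,k\ge3)$. For each such case I would invoke the counterexamples of \cite{Shv} and \cite[p.~141]{She}: given any partition $\z$ and any proposed constant $c$, they exhibit an $f\in\Delta^q\cap\C^\nu[-1,1]$ for which no $s\in\Spl{k+\nu-1}{\z}\cap\Delta^q$ satisfies the claimed local Whitney-type estimate, which is exactly the obstruction asserted in the second half of the theorem.

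The main obstacle is organizational rather than conceptual: the four cited lemmas are stated in slightly different notations and conventions, so one must verify that each genuinely provides the local form with an acceptable dependence of the constant on $\scale(\z)$, and that the monotone or convex polynomial pieces really glue into a globally $q$-monotone spline of the required degree. This glueing issue---preserving $q$-monotonicity across knots while respecting the degree and smoothness constraints---is precisely the technical core of the shape-preserving papers cited, and the task here is only to assemble their conclusions into the unified statement above.
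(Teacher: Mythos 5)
Your proposal matches the paper's treatment exactly: the paper gives no detailed proof but derives the positive part from precisely the same four sources with the same case split (Whitney's inequality for $q=1$, $1\le k+\nu\le 2$; \cite[Lemma 2]{LeShejat98} for $q=1$, $\nu\ge 1$; \cite[Corollary 2.4]{LeShejat03} for $q=2$, $\nu\ge 2$; \cite{Ko94} for $q=2$, $2\le k+\nu\le 3$), and the negative part from \cite{Shv} and \cite[p.~141]{She}. Your identification of the complementary region and your remarks on glueing are consistent with the paper's Figures~1 and~2, so this is the same argument, only spelled out in slightly more detail.
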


We note that the case $q=2$, $k+\nu=1$ is excluded from the statement of \thm{thminf} (as well as statements
of \thm{thmp} and Corollaries~\ref{thminfsmooth}, \ref{corol23}, \ref{thmpsmooth} and \ref{corollp}). Indeed, the only convex piecewise
 constant ppfs are constant functions on $[-1,1]$.

Using \thm{c12} we can now obtain the following consequence of this result for monotone and convex approximation by  splines of any smoothness.

\begin{corollary}[$p=\infty$] \label{thminfsmooth}
  Let $q=1$ or $2$, and $k, \nu\in\N_0$  be such that either $\nu\geq q$ or $q\leq k+\nu\leq q+1$.
  Then, for any $r\geq k+\nu-1$ and partition $\z = (z_i)_{i=0}^n$  of $[-1,1]$, there exists  a constant $\delta=\delta(r)$ such that for any $\tbz{m}\in\refclass{\z}$ and every $f\in\Delta^q\cap \C^\nu[-1,1]$,
there exists a spline   $\tilde s \in \Sspl{r}{\tbz{m}} \cap \Delta^q$  satisfying
 \[
\norm{f-  \tilde s}{\L_\infty (\J_j)} \le c(k,\nu,r, \scale(\z)) |J_j|^\nu \omega_{k}(f^{(\nu)}, [z_{j-1},z_{j+1}])_\infty
\]
for all $0\leq j\leq n$.
Moreover, this estimate is no longer true in general for $k$ and $\nu$ which do not satisfy the above conditions.
\end{corollary}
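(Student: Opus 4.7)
The plan is to combine \thm{thminf} with the smoothing machinery of \thm{c12} in a standard two-step fashion. First I would apply \thm{thminf} to $f$ on the given partition $\z$ to produce a piecewise polynomial $s\in\Spl{k+\nu-1}{\z}\cap\Delta^q$ with
\[
\norm{f-s}{\L_\infty(J_j)}\le c(k,\nu,\scale(\z))\,|J_j|^\nu\,\omega_k(f^{(\nu)},J_j)_\infty,\qquad 0\le j\le n-1.
\]
Because $k+\nu-1\le r$, the same $s$ lies in $\Spl{r}{\z}\cap\Delta^q$. I would then apply \thm{c12} (with inner parameter $r-q$, or the $r=0$ analog mentioned right after \thm{c12} when $r=q$) to this $s$, obtaining a spline $\tilde s\in\Sspl{r}{\tbz{m}}\cap\Delta^q$ with
\[
\norm{s-\tilde s}{\L_\infty(\J_j)}\le c(r)\,\omega_{r+1}(s,\J_j)_\infty,\qquad 0\le j\le n.
\]
The constant $\delta=\delta(r)$ is inherited from \thm{c12}.

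The two estimates are stitched together by the triangle inequality on $\J_j$. Since $\J_j\subset J_{j-1}\cup J_j\subset[z_{j-1},z_{j+1}]$ and $|J_{j\pm1}|\le\scale(\z)\,|J_j|$, the Step~1 bound already gives $\norm{f-s}{\L_\infty(\J_j)}\le c|J_j|^\nu\omega_k(f^{(\nu)},[z_{j-1},z_{j+1}])_\infty$ in the required shape. For the modulus term I would use subadditivity together with the standard mixed estimate for moduli of smoothness with derivatives:
\[
\omega_{r+1}(s,\J_j)_\infty
\le\omega_{r+1}(f,\J_j)_\infty+2^{r+1}\norm{s-f}{\L_\infty(\J_j)}
\le c|\J_j|^\nu\omega_{r+1-\nu}(f^{(\nu)},\J_j)_\infty+2^{r+1}\norm{s-f}{\L_\infty(\J_j)}.
\]
The precise role of the hypothesis $r\ge k+\nu-1$ appears here: it guarantees $r+1-\nu\ge k$, so by iterating $\omega_{j+1}\le 2\omega_j$ we have $\omega_{r+1-\nu}(f^{(\nu)},\cdot)_\infty\le c\,\omega_k(f^{(\nu)},\cdot)_\infty$. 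Pulling everything back to $[z_{j-1},z_{j+1}]$ via $|\J_j|\le(1+\scale(\z))|J_j|$ yields the claimed bound.

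The main technical annoyance is the handful of degenerate cases where $r=q-1$ (possible only when $k+\nu=q$), which is below the degree covered by \thm{c12} or its $r=0$ analog. For $q=2,\,r=1$ one notes that convex piecewise linear ppf are automatically continuous, so $\Sspl{1}{\tbz{m}}\cap\Delta^2=\Spl{1}{\tbz{m}}\cap\Delta^2$, and the $\tilde s$ obtained by linear interpolation of $s$ at the knots of $\tbz{m}$ is convex (its slopes are averages of slopes of $s$, hence still nondecreasing) with the required accuracy; the situation for $q=1,\,r=0$ (monotone step functions) is handled analogously, as no genuine smoothing is needed. The negative part requires no new ideas: the references \cite{Shv} and \cite[p.~141]{She} invoked for the negative part of \thm{thminf} actually rule out the rate $|J|^\nu\omega_k(f^{(\nu)},J)_\infty$ for \emph{any} $q$-monotone approximant, hence in particular for $\tilde s\in\Sspl{r}{\tbz{m}}\cap\Delta^q$, after renormalizing by the comparability of interval lengths in $\z$ and $\tbz{m}$ provided by the $\delta$-remesh definition.
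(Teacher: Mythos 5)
Your argument is correct and is precisely the route the paper intends: the corollary is presented as an immediate consequence of \thm{thminf} combined with \thm{c12}, stitched together on $\J_j\subset[z_{j-1},z_{j+1}]$ by the triangle inequality and standard modulus inequalities exactly as you do, with $\delta=\delta(r)$ inherited from \thm{c12}. Your explicit bookkeeping for $\omega_{r+1}(s,\J_j)_\infty$ (using $r\ge k+\nu-1$) and your separate treatment of the low-degree cases $r=q$ and $r=q-1$, where \thm{c12} does not directly apply, supply details the paper leaves implicit, and they are sound.
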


Taking into account \cor{c13} and the fact that $|J_j| = n^{-1}$ (if $\z = \UUn$) and $|J_j| \sim \varphi(x) n^{-1} + n^{-2}$, $x\in J_j$ (if $\z=\Chn$), this, in turn, immediately implies:

\begin{corollary}  \label{corol23}
Let $q=1$ or $2$, $k, \nu\in\N_0$  be such that either $\nu\geq q$ or $q\leq k+\nu\leq q+1$.
 Then, for every  $f\in\Delta^q\cap \C^\nu[-1,1]$, $n\in\N$ and $r\geq k+\nu-1$, we have
\be \label{e1}
  E^{(q)}(f,\Sspl{r}{\UUn})_\infty\le c(k,\nu,r)
  n^{-\nu}\omega_k(f^{(\nu)},n^{-1})_\infty
\ee
and
\be \label{e2}
  E^{(q)}(f,\Sspl{r}{\Chn})_\infty\le c(k,\nu,r)
  n^{-\nu}\omega_k^\phi(f^{(\nu)},n^{-1})_\infty \, .
\ee
Moreover, these estimates are no longer true in general for $k$ and $\nu$ which do not satisfy the above conditions.
\end{corollary}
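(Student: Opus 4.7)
The plan is to deduce \eqref{e1} and \eqref{e2} as direct consequences of \cor{thminfsmooth} once we realize $\UUn$ (respectively $\Chn$) as a $\delta(r)$-remesh of a coarser partition of the same type. Let $\delta=\delta(r)$ be the constant furnished by \cor{thminfsmooth}. By the last two bullets in the list of properties of $\refclass{\cdot}$ stated after \ineq{ratio}, one may choose $n'\sim n$ (with implicit constant depending only on $r$) so that $\UUn\in\refclass{\UU_{n'}}$ in the uniform case and $\Chn\in\refclass{\Ch_{n'}}$ in the Chebyshev case; the degenerate cases $n\lesssim 1$ are trivial. Since $\scale(\UU_{n'})=1$ and $\scale(\Ch_{n'})$ is bounded by an absolute constant, the constant $c(k,\nu,r,\scale(\z))$ in the conclusion of \cor{thminfsmooth} collapses to $c(k,\nu,r)$.

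Applying \cor{thminfsmooth} with $\z:=\UU_{n'}$ and $\tbz{m}:=\UUn$ (respectively $\z:=\Ch_{n'}$ and $\tbz{m}:=\Chn$) yields $\tilde s\in\Sspl{r}{\UUn}\cap\Delta^q$ (respectively $\tilde s\in\Sspl{r}{\Chn}\cap\Delta^q$) satisfying the local bound
\[
\norm{f-\tilde s}{\L_\infty(\J_j)}\;\leq\; c(k,\nu,r)\,|J_j|^\nu\,\omega_k\big(f^{(\nu)},[z_{j-1},z_{j+1}]\big)_\infty,\qquad 0\leq j\leq n'.
\]
For the uniform case one has $|J_j|=2/n'$ and $|[z_{j-1},z_{j+1}]|=4/n'$; taking the maximum over $j$ and invoking $\omega_k(g,\lambda t)_\infty\leq (1+\lambda)^k\omega_k(g,t)_\infty$ together with $n'\sim n$ delivers \eqref{e1} at once.

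For the Chebyshev case one uses the comparison $|J_j|\sim\phi(x)n^{-1}+n^{-2}$ for $x\in J_j$, whereupon the passage to \eqref{e2} rests on the classical Ditzian--Totik equivalence
\[
\max_{0\leq j\leq n'} |J_j|^\nu\,\omega_k\big(g,[z_{j-1},z_{j+1}]\big)_\infty \;\leq\; c(k,\nu)\, n^{-\nu}\,\omega_k^\phi(g,n^{-1})_\infty,
\]
applied to $g:=f^{(\nu)}$; this inequality follows from the definition of $\omega_k^\phi$ and the identification of the Chebyshev mesh width with the characteristic step $h\phi(x)$, $h\sim 1/n$, of the weighted modulus. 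This is the one step that is not pure bookkeeping, and where the main technical content of the Ditzian--Totik framework enters; but it is well known and available off-the-shelf.

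The negative assertion follows from the Shevchuk-type counterexamples in \cite{Shv} and \cite[p.~141]{She}. For each $(k,\nu)$ outside the admissible range, these constructions produce a function $f\in\Delta^q\cap\C^\nu[-1,1]$ whose best $q$-monotone approximation by splines of \emph{any} fixed degree $r\geq k+\nu-1$ on $\UUn$ (respectively $\Chn$) violates the bounds in question for infinitely many $n$, and therefore rules out \eqref{e1} and \eqref{e2} for the higher-smoothness classes $\Sspl{r}{\UUn}\cap\Delta^q$ and $\Sspl{r}{\Chn}\cap\Delta^q$ which appear on the left-hand sides.
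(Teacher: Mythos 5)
Your proposal is correct and follows essentially the same route as the paper: \cor{corol23} is obtained from the local estimate of \cor{thminfsmooth} by realizing $\UUn$ (resp.\ $\Chn$) as a $\delta$-remesh of a coarser partition of the same type (the content of \cor{c13} and the bullet properties of $\refclass{\z}$), noting that $\scale(\UU_{n'})$ and $\scale(\Ch_{n'})$ are absolute constants, and then converting the local bounds via $|J_j|\sim n^{-1}$ (uniform) and $|J_j|\sim\phi(x)n^{-1}+n^{-2}$ (Chebyshev) together with the standard Ditzian--Totik comparison; the negative part rests on the same counterexamples from \cite{Shv} and \cite{She} that the paper invokes. The extra details you supply (the choice $n'\sim n$, the scaling of the modulus, the degenerate small-$n$ case) are exactly what the paper compresses into ``this, in turn, immediately implies.''
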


For the reader's convenience we describe the above results using
arrays in Figures~1 and 2. In these figures as well as Figures~3 and
4 in the case $1\leq p<\infty$ below (with obvious modifications),
the symbols ``$-$"
  and ``$+$" have the following meaning.

\begin{itemize}
\item The symbol ``$+$" in the position $(k,\nu)$ means that
inequalities \ineq{e1} and \ineq{e2} are valid for all $f\in\Delta^q\cap \C^\nu[-1,1]$.
\item The symbol ``$-$" in the position $(k,\nu)$  means that
inequalities \ineq{e1} and \ineq{e2} are not true in general, \ie there are functions
 $f\in\Delta^q\cap \C^\nu[-1,1]$ for which these inequalities fail.
\end{itemize}

\begin{minipage}[t]{2.5in}
\bpic
\psset{unit=7mm}
\pspicture(0,-3)(6,7)
\psgrid[subgriddiv=1,griddots=6,gridlabels=0,gridcolor=gray](0,0)(0,0)(5,5)

\renewcommand{\updots}{\pscircle*[linewidth=\linew](0,0){0.03}
\pscircle*[linewidth=\linew](-0.2,-0.2){0.03}
\pscircle*[linewidth=\linew](-0.4,-0.4){0.03}
}

\uput{\labelseparation}[180](0,0){\labelsize $0$}
\uput{\labelseparation}[180](0,1){\labelsize $1$}
\uput{\labelseparation}[180](0,2){\labelsize $2$}
\uput{\labelseparation}[180](0,3){\labelsize $3$}
\uput{\labelseparation}[180](0,4){\labelsize $4$}
\uput{\labelseparation}[180](0,5){\labelsize $5$}
\uput{\labelseparation}[180](0,6){\labelsize $\mathbf \nu$}

\uput{\labelseparation}[-90](0,0){\labelsize $0$}
\uput{\labelseparation}[-90](1,0){\labelsize $1$}
\uput{\labelseparation}[-90](2,0){\labelsize $2$}
\uput{\labelseparation}[-90](3,0){\labelsize $3$}
\uput{\labelseparation}[-90](4,0){\labelsize $4$}
\uput{\labelseparation}[-90](5,0){\labelsize $5$}
\uput{\labelseparation}[-90](6,0){\labelsize $\mathbf k$}

\rput(2.8,-2){\scriptsize \sc Fig. 1. $q=1$, $p=\infty$}

\multirput*(3,0)(1,0){3}{\minus}
\multirput*(0,2)(1,0){6}{\plus}
\multirput*(0,3)(1,0){6}{\plus}
\multirput*(0,4)(1,0){6}{\plus}
\multirput*(0,5)(1,0){6}{\plus}

\multirput*(1,0)(1,0){2}{\plus}
\multirput*(0,1)(1,0){6}{\plus}

\multirput*(6,0)(0,1){6}{$\cdots$}
\multirput*(0,6)(1,0){6}{$\vdots$}
\rput(6,6){$\updots$}

\epic

\end{minipage}
\begin{minipage}[t]{2.5in}

\bpic
\psset{unit=7mm}
\pspicture(0,-3)(6,6)
\psgrid[subgriddiv=1,griddots=6,gridlabels=0,gridcolor=gray](0,0)(0,0)(5,5)

\renewcommand{\updots}{\pscircle*[linewidth=\linew](0,0){0.03}
\pscircle*[linewidth=\linew](-0.2,-0.2){0.03}
\pscircle*[linewidth=\linew](-0.4,-0.4){0.03}
}

\uput{\labelseparation}[180](0,0){\labelsize $0$}
\uput{\labelseparation}[180](0,1){\labelsize $1$}
\uput{\labelseparation}[180](0,2){\labelsize $2$}
\uput{\labelseparation}[180](0,3){\labelsize $3$}
\uput{\labelseparation}[180](0,4){\labelsize $4$}
\uput{\labelseparation}[180](0,5){\labelsize $5$}
\uput{\labelseparation}[180](0,6){\labelsize $\mathbf \nu$}

\uput{\labelseparation}[-90](0,0){\labelsize $0$}
\uput{\labelseparation}[-90](1,0){\labelsize $1$}
\uput{\labelseparation}[-90](2,0){\labelsize $2$}
\uput{\labelseparation}[-90](3,0){\labelsize $3$}
\uput{\labelseparation}[-90](4,0){\labelsize $4$}
\uput{\labelseparation}[-90](5,0){\labelsize $5$}
\uput{\labelseparation}[-90](6,0){\labelsize $\mathbf k$}

\rput(2.8,-2){\scriptsize \sc Fig. 2. $q=2$, $p=\infty$}

\multirput*(4,0)(1,0){2}{\minus}
\multirput*(3,1)(1,0){3}{\minus}
\multirput*(0,2)(1,0){6}{\plus}
\multirput*(0,3)(1,0){6}{\plus}
\multirput*(0,4)(1,0){6}{\plus}
\multirput*(0,5)(1,0){6}{\plus}

\multirput*(2,0)(1,0){2}{\plus}
\multirput*(1,1)(1,0){2}{\plus}

\multirput*(6,0)(0,1){6}{$\cdots$}
\multirput*(0,6)(1,0){6}{$\vdots$}
\rput(6,6){$\updots$}

\epic

\end{minipage}

We remark that the estimates in \cor{corol23} (and arrays in Figures 1 and 2) are well known for algebraic polynomials as well as splines of
low smoothness. For  splines of minimal defect, only some special cases when $\z=\UUn$ were known (see, \eg, \cite[Theorem 1]{DeV-77}, \cite[Theorem 2.1]{LeMh},
\cite[Theorem 1]{Be-81}, \cite[Corollaries 5.3 and 5.4]{KoMC}).

\subsection{{\bf Monotone and convex spline approximation: $1\leq p< \infty$}}

\mbox{}

\begin{theorem}[$1\leq p<\infty$] \label{thmp}
  Let $q=1$ or $2$, $1\leq p<\infty$, and $k, \nu\in\N_0$  be such that either $\nu\geq q+1$ or $q\leq k+\nu\leq q+1$.
  Then, for every
 $f\in\Delta^q\cap \W^\nu_p[-1,1]$, $n\in\N$,  and any partition $\z = (z_i)_{i=0}^n$  of $[-1,1]$,
 there exists  $s\in \Spl{k+\nu-1}{\z}\cap \Delta^q$ such that
\[
\norm{f-  s}{\Lp(J_j)} \le c(k,\nu, \scale(\z)) |J_j|^\nu \omega_{k}(f^{(\nu)}, J_j)_p \,, \; 0\leq j\leq n-1\,.
\]
Moreover, this estimate is no longer true in general for $k$ and $\nu$ which do not satisfy the above conditions.
\end{theorem}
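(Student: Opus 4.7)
My plan is to deduce \thm{thmp} from \thm{thminf} by composing the $\L_\infty$ construction with a local mollification. The essential fact I will exploit is that the constructions underlying the positive part of \thm{thminf} (from \cite{LeShejat98}, \cite{LeShejat03}, \cite{Ko94}) are local: the piece of $s$ on $J_j$ is determined by the values of $f$ on a bounded neighbourhood $J_j^\ast := [z_{j-L},z_{j+L}]$, with $L=L(k,\nu,q)$. This allows one to use a different smoothing scale on different intervals $J_j$ while still producing a single global spline.

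On each $J_j$ I would introduce a local shape-preserving mollification $f_j\in \C^\infty(J_j^\ast)\cap\Delta^q(J_j^\ast)$ of $f$ at scale $h_j\sim|J_j|$ --- for instance, an iterated Steklov mean, which preserves both monotonicity and convexity because it is a positive convex combination of translates of $f$. Standard estimates for such mollifications yield
\[
\norm{f-f_j}{\Lp(J_j)}\le c\,|J_j|^\nu\,\omega_k(f^{(\nu)},J_j^\ast)_p
\]
together with the Nikolskii-type bound
\[
\omega_k(f_j^{(\nu)},J_j)_\infty\le c\,|J_j|^{-1/p}\,\omega_k(f^{(\nu)},J_j^\ast)_p.
\]
Applying the shape-preserving local polynomial approximation lemma that underlies \thm{thminf} to $f_j$ on $J_j^\ast$ produces a polynomial $P_j\in\Poly_{k+\nu-1}\cap\Delta^q(J_j^\ast)$ with $\norm{f_j-P_j}{\L_\infty(J_j)}\le c\,|J_j|^\nu\omega_k(f_j^{(\nu)},J_j)_\infty$. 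Combining this with the trivial inequality $\norm{\cdot}{\Lp(J_j)}\le|J_j|^{1/p}\norm{\cdot}{\L_\infty(J_j)}$, the two displayed bounds, and the triangle inequality gives
\[
\norm{f-P_j}{\Lp(J_j)}\le c\,|J_j|^\nu\,\omega_k(f^{(\nu)},J_j^\ast)_p .
\]
The $P_j$'s are then assembled into a global $s\in\Spl{k+\nu-1}{\z}\cap\Delta^q$ by the same shape-preserving partition-of-unity / nodal-spline gluing used in the references for \thm{thminf}, so that $s|_{J_j}$ remains $\Lp$-close to $P_j$ and the local estimate survives up to a triangle-inequality constant.

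The negative part is inherited from the counterexamples of \cite{Shv} and \cite{She}: for each disallowed $(k,\nu)$, those works provide $f\in\Delta^q\cap \C^\nu[-1,1]$ (hence automatically in $\Delta^q\cap\W_p^\nu[-1,1]$) for which no $s\in\Spl{k+\nu-1}{\z}\cap\Delta^q$ realises the required $\L_\infty$ bound; since the obstruction is concentrated on a small subinterval $J$ and takes place inside the finite-dimensional cone $\Poly_{k+\nu-1}\cap\Delta^q(J)$, on which $\Lp$ and $\L_\infty$ norms are equivalent, the same examples (with a localised rescaling if needed) force failure in $\Lp$ as well. The main technical obstacle will be the gluing step: one must verify that replacing the pieces of the Whitney--Leviatan--Shevchuk construction by the mollified $P_j$'s does not inflate the cross-interval transitions enough to destroy global $q$-monotonicity, and this is precisely where the stronger hypothesis $\nu\ge q+1$ (as opposed to $\nu\ge q$ in the $\L_\infty$ version) enters --- the extra derivative supplies the additional matching degree needed to maintain the shape across knots when the error is measured in $\Lp$ rather than $\L_\infty$.
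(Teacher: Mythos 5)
Your positive-part argument has a gap that is not merely technical. The local estimates you derive (mollify at scale $|J_j|$, apply the $\L_\infty$ lemma to $f_j$, convert back to $\Lp$ via $\norm{\cdot}{\Lp(J_j)}\le|J_j|^{1/p}\norm{\cdot}{\L_\infty(J_j)}$) use only the hypotheses of \thm{thminf}, i.e.\ $\nu\ge q$; nothing in them invokes $\nu\ge q+1$. But the case $\nu=q$ (for instance $q=1$, $\nu=1$, $k\ge 2$) is genuinely \emph{false} in $\Lp$ --- it lies in the negative region of Figures~3 and 4 --- so the entire obstruction is concentrated in the step you defer, namely assembling the local polynomials $P_j$ (each approximating a \emph{different} mollification $f_j$) into a single globally $q$-monotone ppf while keeping the local bounds. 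Deferring that step is deferring the theorem: as written, your scheme would ``prove'' statements known to be false, which shows the gluing cannot be handled by the routine partition-of-unity argument you gesture at. The paper's route is much shorter and sidesteps mollification entirely: for $\nu\ge1$ it applies \thm{thminf} with parameters $(k+1,\nu-1)$ (same spline space $\Spl{k+\nu-1}{\z}$, and the hypothesis $\nu-1\ge q$ is exactly $\nu\ge q+1$) and then uses $\omega_{k+1}(f^{(\nu-1)},J)_\infty\le c|J|^{1-1/p}\omega_k(f^{(\nu)},J)_p$ together with the $|J_j|^{1/p}$ factor from passing to $\Lp$, so the powers of $|J_j|$ combine to give $|J_j|^\nu\omega_k(f^{(\nu)},J_j)_p$; the remaining cases $\nu=0$, $q\le k\le q+1$ are treated separately via \cite[Theorem~1.2]{DHL} and its piecewise-linear modification.

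The negative part also cannot be ``inherited'' from \cite{Shv} and \cite{She}. The $\Lp$ negative region is strictly larger than the $\L_\infty$ one (again, $(k,\nu)=(2,1)$ with $q=1$ is a ``$+$'' in Fig.~1 but a ``$-$'' in Fig.~3), so the $\L_\infty$ counterexamples simply do not exist for some of the pairs you must rule out; and even where they do, failure of the $\L_\infty$ estimate for a given $f$ does not formally imply failure of the $\Lp$ estimate, because both sides of the inequality change norm and your finite-dimensional norm-equivalence argument only controls the left-hand side. The paper instead derives the negative part from \cite[Theorem 1]{Ko95}, which supplies the $\Lp$-specific counterexamples.
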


The positive part of \thm{thmp} for $\nu\geq1$ follows from
\thm{thminf} and the well-known inequality
\[
\omega_{r+1}(f,J)_\infty \le c |J|^{1-1/p} \, \omega_r(f',
J)_p,
\]
 where $J$ is a closed interval, $f\in \W^1_p(J)$, $1\le p
<\infty$ and $r\in\N_0$. For $\nu=0$, the case $(q,k)=(1,1)$ is
straightforward, $(q,k)=(2,3)$ is~\cite[Theorem~1.2]{DHL} and the
cases $(q,k)=(1,2)$ and $(2,2)$ are established following the proof
of~\cite[Theorem~1.2]{DHL} using piecewise linear ppfs which interpolate
$f$ at the knots instead of piecewise quadratic functions in \cite[Lemma~2.2]{DHL}. Negative part
of~\thm{thmp} is a consequence of
\cite[Theorem 1]{Ko95}.

As in the case $p=\infty$,   \thm{c12} yields the following stronger result for monotone and convex approximation by  splines of any smoothness.

\begin{corollary}[$1\leq p<\infty$] \label{thmpsmooth}
  Let $q=1$ or $2$, $1\leq p<\infty$, and $k, \nu\in\N_0$  be such that either $\nu\geq q+1$ or $q\leq k+\nu\leq q+1$.
  Then, for any $r\geq k+\nu-1$ and partition $\z = (z_i)_{i=0}^n$  of $[-1,1]$, there exists  a constant $\delta=\delta(r)$ such that for any $\tbz{m}\in\refclass{\z}$ and every $f\in\Delta^q\cap  \W^\nu_p[-1,1]$,
there exists a spline   $\tilde s \in \Sspl{r}{\tbz{m}} \cap \Delta^q$  satisfying
 \[
\norm{f-  \tilde s}{\Lp(\J_j)} \le c(k,\nu,r, \scale(\z)) |J_j|^\nu \omega_{k}(f^{(\nu)}, [z_{j-1},z_{j+1}])_p
\]
for all $0\leq j\leq n$.
Moreover, this estimate is no longer true in general for $k$ and $\nu$ which do not satisfy the above conditions.
\end{corollary}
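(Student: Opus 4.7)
The plan is to combine the non-smooth Jackson estimate of \thm{thmp} with the constrained smoothing machinery provided by \thm{c12}, in direct parallel with the derivation of \cor{thminfsmooth} from \thm{thminf} and \thm{c12} in the $p=\infty$ setting. Given $f\in\Delta^q\cap\W^\nu_p[-1,1]$, I would first invoke \thm{thmp} to produce a ppf $s\in\Spl{k+\nu-1}{\z}\cap\Delta^q$ with
\[
\|f-s\|_{\Lp(J_j)}\le c|J_j|^\nu\omega_k(f^{(\nu)},J_j)_p,\qquad 0\le j\le n-1.
\]
Since $r\ge k+\nu-1$, $s$ also lies in $\Spl{r}{\z}\cap\Delta^q$, so \thm{c12} (applied with its ``$r$'' equal to $r-q$, so that $q+(r-q)+1=r+1$; in the residual corners where $r-q<1$ one appeals either to the $r=0$ analog mentioned in the paper or to the fact that $\Delta^q\subset\C^{q-2}$ already supplies the needed regularity) yields $\delta=\delta(r)$ and, for every $\tbz{m}\in\refclass{\z}$, a minimal-defect spline $\tilde s\in\Sspl{r}{\tbz{m}}\cap\Delta^q$ with
\[
\|s-\tilde s\|_{\Lp(\J_j)}\le c\,\omega_{r+1}(s,\J_j)_p,\qquad 0\le j\le n.
\]

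The proof then reduces to bounding the two summands in
\[
\|f-\tilde s\|_{\Lp(\J_j)}\le\|f-s\|_{\Lp(\J_j)}+\|s-\tilde s\|_{\Lp(\J_j)}.
\]
Because $\J_j\subset J_{j-1}\cup J_j\subset[z_{j-1},z_{j+1}]$ and $\scale(\z)$ is bounded, splitting and reusing the estimate from \thm{thmp} on $J_{j-1}$ and $J_j$ gives $\|f-s\|_{\Lp(\J_j)}\le c|J_j|^\nu\omega_k(f^{(\nu)},[z_{j-1},z_{j+1}])_p$. For the second term I would write $\omega_{r+1}(s,\J_j)_p\le\omega_{r+1}(f,\J_j)_p+\omega_{r+1}(f-s,\J_j)_p$; the first piece is bounded by $c|\J_j|^\nu\omega_{r+1-\nu}(f^{(\nu)},\J_j)_p$ via the standard derivative reduction (legal since $f\in\W^\nu_p$ and $r+1\ge\nu$), then by $c|J_j|^\nu\omega_k(f^{(\nu)},[z_{j-1},z_{j+1}])_p$ via the monotonicity $\omega_{r+1-\nu}\le c\omega_k$ (legal since $r+1-\nu\ge k$) and the inclusion $\J_j\subset[z_{j-1},z_{j+1}]$; the second piece is trivially dominated by $2^{r+1}\|f-s\|_{\Lp(\J_j)}$, which has already been controlled.

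The main obstacle will be the bookkeeping for the small-smoothness corners $k+\nu\in\{q,q+1\}$ in which $r$ may be strictly less than $q+1$ and \thm{c12} is not applicable verbatim; in those cases one has to rely on the $r=0$ analog of \thm{c12} or, at the very lowest degree, exploit that the shape constraint $\Delta^q\subset\C^{q-2}$ already forces $s\in\Sspl{r}{\z}$ and then merely transfer $s$ to the remeshed partition $\tbz{m}$. The negative assertion is inherited from the negative half of \thm{thmp}: since $\Sspl{r}{\tbz{m}}\cap\Delta^q\subset\Spl{r}{\tbz{m}}\cap\Delta^q$, any function witnessing the failure of the Jackson rate in the non-smooth setting automatically defeats the estimate of the corollary as well.
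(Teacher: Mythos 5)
Your proposal is correct and follows essentially the same route the paper intends: the paper derives Corollary~\ref{thmpsmooth} exactly by combining the non‑smooth local Jackson estimate of Theorem~\ref{thmp} with the smoothing Theorem~\ref{c12} (applied with its parameter equal to $r-q$, falling back on the $r=0$ analog via Lemma~\ref{lemcms} and Corollary~\ref{corms} in the low‑degree corners), and your triangle‑inequality bookkeeping with the standard derivative‑reduction and monotonicity properties of $\omega_k$ is the expected way to merge the two error bounds. The negative part is likewise inherited from Theorem~\ref{thmp} via the inclusion $\Sspl{r}{\tbz{m}}\subset\Spl{r}{\tbz{m}}$, just as you argue.
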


Recalling that for $f\in\Lp[-1,1]$, $1\leq p <\infty$, and $k,\mu\in\N$, the following estimates are true
(see, \eg,  \cite{pp-book, dly}):
\[
\sum_{j=0}^{n-\mu-1} \w_k\left(f, \ds \cup_{i=j}^{j+\mu} J_i\right)^p_p \le
\left\{
\begin{array}{ll}
c(k,\mu) \, \w_k(f,n^{-1})^p_p\,, & \mbox{\rm if $\z=\UUn$} \,, \\
 c(k, \mu) \, \w_k^\varphi(f,n^{-1})^p_p \,, & \mbox{\rm if $\z=\Chn$} \, ,
\end{array}
\right.
\]
 we get the following consequence of \cor{thmpsmooth}.

\begin{corollary}  \label{corollp}
Let $q=1$ or $2$, $1\leq p<\infty$, and $k, \nu\in\N_0$  be such that either $\nu\geq q+1$ or $q\leq k+\nu\leq q+1$.
 Then, for any  $f\in\Delta^q\cap  \W^\nu_p[-1,1]$, $n\in\N$ and $r\geq k+\nu-1$, we have
\be \label{e1p}
  E^{(q)}(f,\Sspl{r}{\UUn})_p\le c(k,\nu,r)
  n^{-\nu}\omega_k(f^{(\nu)},n^{-1})_p
\ee
and
\be \label{e2p}
  E^{(q)}(f,\Sspl{r}{\Chn})_p\le c(k,\nu,r)
  n^{-\nu}\omega_k^\phi(f^{(\nu)},n^{-1})_p \, .
\ee
Moreover, these estimates are no longer true in general for $k$ and $\nu$ which do not satisfy the above conditions.
\end{corollary}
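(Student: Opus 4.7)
The plan is to derive both \ineq{e1p} and \ineq{e2p} from a single application of \cor{thmpsmooth}, followed by summing the local estimates and invoking the inequality quoted immediately above the statement of \cor{corollp} to convert a sum of local $\omega_k$'s into a single global $\omega_k$ (or $\omega_k^\varphi$).

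Let $\delta = \delta(r)$ be the constant from \cor{thmpsmooth}. First I choose an integer $n_0$ comparable to $n$ so that, by the properties of $\refclass{\cdot}$ collected in Section~1.1, $\UUn\in\refclass{\UU_{n_0}}$ and $\Chn\in\refclass{\Ch_{n_0}}$; concretely, $n_0 := \lfloor c(r)\, n\rfloor$ with $c(r) := \min\{\delta/25,\,1\}$ works, the finitely many small values of $n$ being absorbed into the constant since $f$ lies in a Sobolev class. Applying \cor{thmpsmooth} with $\z = \UU_{n_0}$ (resp.~$\Ch_{n_0}$) and $\tbz{m} = \UUn$ (resp.~$\Chn$) produces a spline $\tilde s \in \Sspl{r}{\UUn}\cap\Delta^q$ (resp.~$\Sspl{r}{\Chn}\cap\Delta^q$) with
\[
\norm{f-\tilde s}{\Lp(\J_j)}^p \le c(k,\nu,r)\, |J_j|^{\nu p}\, \omega_k(f^{(\nu)}, J_{j-1}\cup J_j)_p^p,\qquad 0\le j\le n_0,
\]
since $\scale(\z)$ is bounded by an absolute constant for both uniform and Chebyshev partitions.

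Since the intervals $\J_j$ tile $[-1,1]$, summing over $j$ controls $\|f-\tilde s\|_p^p$. In both regimes $|J_j|\le c/n_0 \le c/n$ (for Chebyshev this follows from $|J_j|\sim \varphi(x)/n_0 + 1/n_0^2$ for $x\in J_j$), so the factor $n^{-\nu p}$ pulls out of the sum, leaving $\sum_j \omega_k(f^{(\nu)}, J_{j-1}\cup J_j)_p^p$; by the inequality quoted just before \cor{corollp} (with $\mu=1$), this is bounded by $\omega_k(f^{(\nu)}, n_0^{-1})_p^p$ in the uniform case and by $\omega_k^\varphi(f^{(\nu)}, n_0^{-1})_p^p$ in the Chebyshev case, up to a trivial adjustment at the two boundary values of $j$. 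Monotonicity of the modulus in its second argument replaces $n_0^{-1}$ by $n^{-1}$, and taking $p$-th roots yields \ineq{e1p} and \ineq{e2p}.

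The negative part is immediate: since $\Sspl{r}{\z}\subset\Spl{r}{\z}$, one has $E^{(q)}(f,\Sspl{r}{\z})_p \ge E^{(q)}(f,\Spl{r}{\z})_p$, so any counterexample to the corresponding global bound for $\Spl{r}{\UUn}$ (resp.~$\Spl{r}{\Chn}$) also defeats \ineq{e1p} (resp.~\ineq{e2p}); such examples are the content of \cite[Theorem~1]{Ko95} and its Chebyshev analogue. There is therefore no genuine obstacle in the argument; the only nontrivial point worth flagging is that the crude bound $|J_j|\le c/n$ already suffices in the Chebyshev case, because the stronger $\omega_k^\varphi$-modulus on the right is exactly what the quoted sum inequality delivers, so no separate endpoint analysis is needed. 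Everything else is routine bookkeeping of the constants.
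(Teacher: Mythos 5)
Your argument is correct and follows the paper's own route: \cor{corollp} is obtained by applying \cor{thmpsmooth} with a coarser uniform (resp.\ Chebyshev) partition of which $\UUn$ (resp.\ $\Chn$) is a $\delta$-remesh, summing the local estimates over the covering intervals $\J_j$, and invoking the displayed summation inequality for the moduli, with the negative part inherited from the non-smooth case. One small correction: replacing $\omega_k(f^{(\nu)},n_0^{-1})_p$ by $\omega_k(f^{(\nu)},n^{-1})_p$ when $n_0\le n$ is \emph{not} monotonicity (which goes the other way); you need the standard doubling property $\omega_k(g,\lambda t)_p\le (\lambda+1)^k\omega_k(g,t)_p$ (and its $\varphi$-modulus analogue).
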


\begin{minipage}[t]{2.5in}
\bpic
\psset{unit=7mm}
\pspicture(0,-3)(6,7)
\psgrid[subgriddiv=1,griddots=6,gridlabels=0,gridcolor=gray](0,0)(0,0)(5,5)

\renewcommand{\updots}{\pscircle*[linewidth=\linew](0,0){0.03}
\pscircle*[linewidth=\linew](-0.2,-0.2){0.03}
\pscircle*[linewidth=\linew](-0.4,-0.4){0.03}
}

\uput{\labelseparation}[180](0,0){\labelsize $0$}
\uput{\labelseparation}[180](0,1){\labelsize $1$}
\uput{\labelseparation}[180](0,2){\labelsize $2$}
\uput{\labelseparation}[180](0,3){\labelsize $3$}
\uput{\labelseparation}[180](0,4){\labelsize $4$}
\uput{\labelseparation}[180](0,5){\labelsize $5$}
\uput{\labelseparation}[180](0,6){\labelsize $\mathbf \nu$}

\uput{\labelseparation}[-90](0,0){\labelsize $0$}
\uput{\labelseparation}[-90](1,0){\labelsize $1$}
\uput{\labelseparation}[-90](2,0){\labelsize $2$}
\uput{\labelseparation}[-90](3,0){\labelsize $3$}
\uput{\labelseparation}[-90](4,0){\labelsize $4$}
\uput{\labelseparation}[-90](5,0){\labelsize $5$}
\uput{\labelseparation}[-90](6,0){\labelsize $\mathbf k$}

\rput(2.8,-2){\scriptsize \sc Fig. 3. $q=1$, $1\leq p<\infty$}

\multirput*(3,0)(1,0){3}{\minus}
\multirput*(2,1)(1,0){4}{\minus}
\multirput*(0,2)(1,0){6}{\plus}
\multirput*(0,3)(1,0){6}{\plus}
\multirput*(0,4)(1,0){6}{\plus}
\multirput*(0,5)(1,0){6}{\plus}

\multirput*(1,0)(1,0){2}{\plus}
\multirput*(0,1)(1,0){2}{\plus}

\multirput*(6,0)(0,1){6}{$\cdots$}
\multirput*(0,6)(1,0){6}{$\vdots$}
\rput(6,6){$\updots$}

\epic

\end{minipage}
\begin{minipage}[t]{2.5in}

\bpic
\psset{unit=7mm}
\pspicture(0,-3)(6,6)
\psgrid[subgriddiv=1,griddots=6,gridlabels=0,gridcolor=gray](0,0)(0,0)(5,5)

\renewcommand{\updots}{\pscircle*[linewidth=\linew](0,0){0.03}
\pscircle*[linewidth=\linew](-0.2,-0.2){0.03}
\pscircle*[linewidth=\linew](-0.4,-0.4){0.03}
}

\uput{\labelseparation}[180](0,0){\labelsize $0$}
\uput{\labelseparation}[180](0,1){\labelsize $1$}
\uput{\labelseparation}[180](0,2){\labelsize $2$}
\uput{\labelseparation}[180](0,3){\labelsize $3$}
\uput{\labelseparation}[180](0,4){\labelsize $4$}
\uput{\labelseparation}[180](0,5){\labelsize $5$}
\uput{\labelseparation}[180](0,6){\labelsize $\mathbf \nu$}

\uput{\labelseparation}[-90](0,0){\labelsize $0$}
\uput{\labelseparation}[-90](1,0){\labelsize $1$}
\uput{\labelseparation}[-90](2,0){\labelsize $2$}
\uput{\labelseparation}[-90](3,0){\labelsize $3$}
\uput{\labelseparation}[-90](4,0){\labelsize $4$}
\uput{\labelseparation}[-90](5,0){\labelsize $5$}
\uput{\labelseparation}[-90](6,0){\labelsize $\mathbf k$}

\rput(2.8,-2){\scriptsize \sc Fig. 4. $q=2$, $1\leq p<\infty$}

\multirput*(4,0)(1,0){2}{\minus}
\multirput*(3,1)(1,0){3}{\minus}
\multirput*(2,2)(1,0){4}{\minus}
\multirput*(0,2)(1,0){2}{\plus}
\multirput*(0,3)(1,0){6}{\plus}
\multirput*(0,4)(1,0){6}{\plus}
\multirput*(0,5)(1,0){6}{\plus}

\multirput*(2,0)(1,0){2}{\plus}
\multirput*(1,1)(1,0){2}{\plus}

\multirput*(6,0)(0,1){6}{$\cdots$}
\multirput*(0,6)(1,0){6}{$\vdots$}
\rput(6,6){$\updots$}

\epic

\end{minipage}

\section{Further results and proofs}
 
 \mbox{}

Everywhere in this section, if $p$ is a polynomial piece of a spline $s$ on an interval $J$, \ie 
$p:=s|_J$, then $p(x)$ for $x\not\in J$ is the polynomial extension of $s|_J$.

\subsection{Constrained spline smoothing: proof of \thm{smoothing}}

 \mbox{}

\begin{lemma}[\mbox{\cite[Lemma 2.4 and Corollary 2.5]{KoLePr}}]\label{klp}
Let $s\in\spl$, and suppose that the polynomials $p_j := s |_{[z_j,
z_{j+1}]}$, $0\leq j\leq n-1$. Then, for every $1\leq j\leq n-1$ and
all $0<p\leq \infty$,
\[
|J_j|^{k+1/p} |p_j^{(k)}(z_j)-p_{j-1}^{(k)}(z_{j})|\le c (p, r,\scale(\z))
 \w_{r+1}(s,[z_{j-1},z_{j+1}])_p \, ,
\]
for $ 0\leq k\leq r$, and
\[
\|p_j-p_{j-1}\|_{\Lp[z_{j-1},z_{j+1}]} \leq c(p,r,\scale(\z))  \w_{r+1}(s,[z_{j-1},z_{j+1}])_p \,  . \]

\end{lemma}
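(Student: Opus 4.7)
The plan is to localize the analysis to the jump at the single knot $z_j$ and exploit the fact that finite differences of order $r+1$ annihilate polynomials of degree $\le r$. Set $g:=p_j-p_{j-1}\in\Poly_r$ and $I:=[z_{j-1},z_{j+1}]$. The Taylor expansion of $g$ at $z_j$, combined with the indicator of $\{x>z_j\}$, yields on $I$ the truncated-power representation
\be\label{pl1}
s(x)=p_{j-1}(x)+\sum_{k=0}^r\frac{g^{(k)}(z_j)}{k!}(x-z_j)_+^k.
\ee
Applying $\Delta_h^{r+1}(\cdot,x,I)$ to \ineq{pl1} kills $p_{j-1}$, leaving the pointwise identity
\be\label{pl2}
\Delta_h^{r+1}(s,x,I)=\sum_{k=0}^r\frac{g^{(k)}(z_j)}{k!}\,\Delta_h^{r+1}((\cdot-z_j)_+^k,x,I),
\ee
valid whenever the stencil $x\pm(r+1)h/2$ lies in $I$.

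Next, I would fix $h=c_0|J_j|$ with $c_0=c_0(r,\scale(\z))$ chosen small enough that the stencil lies in $I$ for every $x$ in some fixed subinterval $I'\subset I$ of length proportional to $|J_j|$ and containing $z_j$. Under the rescaling $u=(x-z_j)/h$, the functions $f_k(u):=\Delta_1^{r+1}(t_+^k,u,\R)$, $0\le k\le r$, are compactly supported, piecewise polynomial and linearly independent, hence span an $(r+1)$-dimensional subspace $V$ of $\Lp(I^*)$ for any fixed bounded interval $I^*\supset(I'-z_j)/h$. By equivalence of all (quasi-)norms on $V$,
\[
\max_{0\le k\le r}|b_k|\le c(r,p)\,\Bigl\|\sum_{k=0}^r b_k f_k\Bigr\|_{\Lp(I^*)}.
\]
Substituting $b_k=g^{(k)}(z_j)h^k/k!$, taking the $\Lp$-norm in $x\in I'$ of \ineq{pl2} (which produces a factor $h^{1/p}$ from the change of variables when $p<\infty$, and no factor when $p=\infty$), and bounding $\|\Delta_h^{r+1}(s,\cdot,I)\|_{\Lp(I')}\le\w_{r+1}(s,I)_p$, I would conclude that $|J_j|^{k+1/p}|g^{(k)}(z_j)|\le c(p,r,\scale(\z))\,\w_{r+1}(s,I)_p$ for each $0\le k\le r$, which is the first asserted inequality.

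For the $\Lp$-bound on $g$ itself, I would Taylor-expand $g(x)=\sum_{k=0}^r\frac{g^{(k)}(z_j)}{k!}(x-z_j)^k$ and use that, on $I$, $|x-z_j|\le(1+\scale(\z))|J_j|$, so that $\|(\cdot-z_j)^k\|_{\Lp(I)}\le c(p,\scale(\z))|J_j|^{k+1/p}$. Combining with the first inequality via the (quasi-)triangle inequality in $\Lp$ yields $\|g\|_{\Lp(I)}\le c(p,r,\scale(\z))\,\w_{r+1}(s,I)_p$. The main obstacle is the case $0<p<1$: the quasi-triangle inequality contributes only a constant factor $c(p,r)$, and the finite-dimensional norm equivalence for $V$ remains valid in the $\Lp$-quasi-norm, so the argument carries through. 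The crucial point is that the rescaled window $I^*$ is universal (depending at most on $r$ and $\scale(\z)$), which is ensured by the choice $h=c_0|J_j|$.
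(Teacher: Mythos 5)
The paper itself offers no proof of Lemma~\ref{klp}: it is imported verbatim from \cite{KoLePr} (Lemma 2.4 and Corollary 2.5), so there is no internal argument to compare against. Your derivation is correct in substance and is a genuinely different, self-contained route from the standard one. The proof in the cited source proceeds via Whitney's inequality: choose $P\in\Poly_r$ with $\|s-P\|_{\Lp(I)}\le c\,\w_{r+1}(s,I)_p$ on $I=[z_{j-1},z_{j+1}]$, observe $\|p_i-P\|_{\Lp(J_i)}=\|s-P\|_{\Lp(J_i)}$ for $i=j-1,j$, inflate these to all of $I$ by comparing quasi-norms of a fixed-degree polynomial on nested intervals of comparable length (this is where $\scale(\z)$ enters), and finish with the quasi-triangle inequality plus Markov--Nikolskii to extract the derivative jumps at $z_j$. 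Your approach instead writes $s$ on $I$ in the truncated-power basis, annihilates the polynomial part with $\Delta_h^{r+1}$, and reads off the jump coefficients by finite-dimensional norm equivalence; this yields an explicit identity linking the differences of $s$ to the jumps and avoids invoking Whitney's inequality for $p<1$, at the cost of the rescaling bookkeeping. Two points should be tightened. First, the norm equivalence must be applied on $I^*:=(I'-z_j)/h$ itself rather than on a larger interval containing it: with $I^*\supset(I'-z_j)/h$ the bound $\max_k|b_k|\le c\|\sum_k b_kf_k\|_{\Lp(I^*)}$ points the wrong way relative to the quantity $h^{-1/p}\|\Delta_h^{r+1}(s,\cdot,I)\|_{\Lp(I')}$ you actually control; taking $I^*$ equal to that window, whose length is bounded below by a constant depending only on $r$ and $\scale(\z)$, makes the equivalence constant uniform and closes the chain. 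Second, you should justify the linear independence of $f_0,\dots,f_r$ on that window, which may contain no breakpoint of the $f_k$: between consecutive breakpoints inside the support, $f_k$ is a polynomial of exact degree $k$, since its leading coefficient is a partial alternating sum of binomial coefficients equal to $\pm{r \choose m}\ne 0$; hence the family is triangular in degree and independent on any nondegenerate subinterval of the support containing $0$. With these two remarks supplied, your argument is complete.
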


\begin{lemma}[Markov's Inequality] \label{markov}
For any polynomial $\p\in\Poly_r$,
\[
\norm{\p'}{\C[a,b]} \leq \frac{2r^2}{b-a} \norm{\p}{\C[a,b]} \, .
\]
\end{lemma}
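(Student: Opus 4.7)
The statement is the classical Markov inequality expressed on a general interval $[a,b]$, and the standard route is to reduce it to the well-known version on $[-1,1]$ via an affine change of variables, so I do not expect any genuine obstacle here.

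The plan is as follows. First, I would invoke the classical Markov inequality on $[-1,1]$: for every $q\in\Poly_r$,
\[
\norm{q'}{\C[-1,1]} \leq r^2 \norm{q}{\C[-1,1]}.
\]
This is a textbook result (due to A.\,A.~Markov), which I would simply cite rather than reprove.

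Next, I would transfer this to $[a,b]$ by the affine substitution $x = \frac{b-a}{2}\,t + \frac{b+a}{2}$, which maps $[-1,1]$ bijectively onto $[a,b]$. Setting $q(t):=\p(x(t))$, one has $q\in\Poly_r$ and $\norm{q}{\C[-1,1]} = \norm{\p}{\C[a,b]}$. Differentiating, $q'(t) = \frac{b-a}{2}\,\p'(x(t))$, so $\norm{q'}{\C[-1,1]} = \frac{b-a}{2}\,\norm{\p'}{\C[a,b]}$.

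Finally, combining these two identities with the classical Markov inequality applied to $q$ gives
\[
\frac{b-a}{2}\,\norm{\p'}{\C[a,b]} = \norm{q'}{\C[-1,1]} \leq r^2\,\norm{q}{\C[-1,1]} = r^2\,\norm{\p}{\C[a,b]},
\]
and rearranging yields the asserted bound $\norm{\p'}{\C[a,b]} \leq \frac{2r^2}{b-a}\,\norm{\p}{\C[a,b]}$. There is no substantive obstacle; the only thing to watch is the Jacobian factor $\frac{b-a}{2}$, which is exactly what produces the constant $2r^2/(b-a)$ on the right-hand side.
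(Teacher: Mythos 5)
Your proof is correct: the affine change of variables is carried out properly, the Jacobian factor $\tfrac{b-a}{2}$ is accounted for, and it yields exactly the constant $2r^2/(b-a)$ in the lemma. The paper states this as a classical fact without proof, and your derivation is precisely the standard reduction to Markov's inequality on $[-1,1]$ that is implicitly intended.
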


\begin{lemma}\label{positive_piece}
Let $r\in\N$, $S\in\Spl{r}{\{-1,0, 1\}}$ (\ie $S$ is a ppf of degree $\le r$ on
$[-1,1]$ with the only breakpoint at $0$), and let $p_1$ and $p_2$ be the
  polynomial pieces of $S$: $p_1:=S|_{[-1,0)}$,
$p_2:=S|_{(0,1]}$. If $S$ is non-negative on $[-1,1]$, then there
exists an interval $I$, $I\subset[-1,0]$ or $I\subset[0,1]$, such that
$|I|\ge \ds\frac1{4 r^2}$, and
\[
S(x) \ge c_1(r) \norm{p_1-p_2}{\infty}\,,  \quad \text{for all } x\in
I.
\]
\end{lemma}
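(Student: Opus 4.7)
Write $M:=\norm{p_1-p_2}{\infty}$ and $S_{\max}:=\max\bigl(\max_{[-1,0]}p_1,\,\max_{[0,1]}p_2\bigr)$. The strategy has two steps: first establish the pointwise lower bound $S_{\max}\ge c(r)\,M$ for an explicit positive constant $c(r)$, and then propagate this bound via Markov's inequality (\lem{markov}) to a lower bound on $S$ over a subinterval of length at least $1/(4r^2)$ lying entirely inside whichever of $[-1,0]$ or $[0,1]$ contains a maximizer of $S$.

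For the first step I would invoke the classical Chebyshev extension inequality for polynomials: every $P\in\Poly_r$ satisfies $\norm{P}{\L_\infty[0,1]}\le T_r(3)\,\norm{P}{\L_\infty[-1,0]}$, and symmetrically $\norm{P}{\L_\infty[-1,0]}\le T_r(3)\,\norm{P}{\L_\infty[0,1]}$, where $T_r$ denotes the Chebyshev polynomial of degree $r$. This follows by mapping $[-1,0]$ affinely onto $[-1,1]$ and using the well-known extremality of $T_r$ for point evaluation at $y=3$. The non-negativity of $S$ yields $\norm{p_1}{\L_\infty[-1,0]}=\max_{[-1,0]}p_1\le S_{\max}$ and $\norm{p_2}{\L_\infty[0,1]}\le S_{\max}$, so the extension inequality gives $\norm{p_i}{\infty}\le T_r(3)\,S_{\max}$ for $i=1,2$, and the triangle inequality then yields
\[
M \ \le\ \norm{p_1}{\infty}+\norm{p_2}{\infty}\ \le\ 2\,T_r(3)\,S_{\max}.
\]

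For the propagation step let $x^*$ be a point at which $S_{\max}$ is attained. By the symmetry between the two pieces I may assume $x^*\in[-1,0]$, so $p_1(x^*)=S_{\max}$. Applying \lem{markov} to $p_1$ on $[-1,0]$ (of length $1$) gives $\norm{p_1'}{\L_\infty[-1,0]}\le 2r^2\,S_{\max}$, hence
\[
p_1(x)\ \ge\ S_{\max}-2r^2\,S_{\max}\,|x-x^*|\ \ge\ S_{\max}/2
\]
for every $x\in[-1,0]$ with $|x-x^*|\le 1/(4r^2)$. Set $I:=[x^*-1/(4r^2),\,x^*+1/(4r^2)]\cap[-1,0]$. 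A short case analysis according to whether $x^*\in[-1,-1+1/(4r^2))$, $x^*\in[-1+1/(4r^2),-1/(4r^2)]$, or $x^*\in(-1/(4r^2),0]$ shows $|I|\ge 1/(4r^2)$ in every case. On this $I$ one has $S=p_1\ge S_{\max}/2\ge M/(4T_r(3))$, so the choice $c_1(r):=1/(4\,T_r(3))$ completes the proof.

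The argument is essentially bookkeeping once the Chebyshev extension inequality and \lem{markov} are in hand; the only nontrivial input is the extension inequality, and the only detail requiring a moment of care is the boundary-case check that guarantees $|I|\ge 1/(4r^2)$ even when $x^*$ is close to $-1$ or to $0$.
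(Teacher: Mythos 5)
Your proof is correct and follows essentially the same route as the paper's: a triangle inequality combined with a degree-$r$ extension (norm-comparison) inequality to bound $\norm{p_1-p_2}{\infty}$ by the maximum of one piece on its own half-interval, followed by Markov's inequality to propagate that maximum to an interval of length $1/(4r^2)$. The only cosmetic differences are that you make the extension constant explicit as $T_r(3)$ where the paper writes $c(r)$, and you phrase the without-loss-of-generality step in terms of where $S$ attains its maximum rather than which polynomial extension has the larger norm on $[-1,1]$.
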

\begin{proof} Let $p\in\Poly_r$ be nonnegative on $[0,1]$
(the same argument applies for $[-1,0]$). Then,
there exists an interval
$I\subset [0,1]$, $|I|\ge\ds\frac1{4 r^2}$ such that
\[
p(x)\ge \frac12\norm{p}{\C[0,1]},
\quad \text{for all } x\in I.
\]
Indeed, let $x^*\in[0,1]$ be a point satisfying $p(x^*)=\norm{p}{\C[0,1]}$.
By \lem{markov}, $\norm{p'}{\C[0,1]}\le
2 r^2\norm{p}{\C[0,1]}$, and so
\[
p(x)\ge p(x^*) - 2 r^2\norm{p}{\C[0,1]}|x-x^*|,
\quad x\in[0,1].
\]
Hence, if $x\in[0,1]$ and $|x-x^*|\le\ds\frac1{4 r^2}$, then
$p(x)\ge\ds\frac12\norm{p}{\C[0,1]}$, as required.

Now suppose without loss of generality that $\norm{p_2}{\infty}\ge
\norm{p_1}{\infty}$. Then
\[
\norm{p_1-p_2}{\infty}\le\norm{p_1}{\infty}+\norm{p_2}{\infty}
\le 2 \norm{p_2}{\infty} \le c(r) \norm{p_2}{\C[0,1]}\,,
\]
 and the
above observation can be used with $p := p_2$.
\end{proof}

The following lemma   follows from Beatson~\cite[Lemma~3.2]{Be}.

\begin{lemma} \label{beatson}
Let $r\in\N$, $d :=2r^2$, and $\p_1,\p_2 \in\Poly_r$.  For any knot
sequence $\bx{d} := (x_i)_{i=0}^d$, $a=x_0<x_1<\dots<x_d=b$,
 there
exists a spline $s\in\Sspl{r}{\bx{d}}$   such that
\begin{itemize}
\item[(i)]
$s(x)$ is a number between   $\p_1(x)$ and
$\p_2(x)$
 for  $x\in[a,b]$,
 \item[(ii)]
$s\equiv   \p_1$ on $(-\infty,a]$, and
$s \equiv \p_2$ on $[b,\infty)$.
\end{itemize}
\end{lemma}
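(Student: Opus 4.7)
My plan is to reduce the problem to an additive one and then build the spline by an iterated smoothing cascade, with Markov's inequality controlling each stage. First, by an affine change of variable I would normalize to $[a,b]=[0,1]$. Setting $q:=\p_2-\p_1\in\Poly_r$, it suffices to construct $\sigma\in\Sspl{r}{\bx{d}}$ with $\sigma\equiv 0$ on $(-\infty,0]$, $\sigma\equiv q$ on $[1,\infty)$, and $\sigma(x)$ between $0$ and $q(x)$ on $[0,1]$; then $s:=\p_1+\sigma$ satisfies (i) and (ii). Equivalently, I need a spline that transitions from the zero polynomial to $q$ across $[0,1]$, matching all $r$ derivatives of the respective polynomial extensions at the two endpoints $x_0=0$ and $x_d=1$.

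The construction proceeds in $r$ successive smoothing stages using the knots $\bx{d}$ (with $d=2r^2$). Begin with a $0$-th stage ppf $\sigma_0$ on $\bx{d}$ that equals $0$ on $[0,x_m]$ and equals $q$ on $[x_m,1]$ for a central knot $x_m$, so that $\sigma_0$ is trivially between $0$ and $q$ but merely piecewise smooth with a jump at $x_m$. At stage $k=1,\dots,r$, I would replace $\sigma_{k-1}$ by a ppf $\sigma_k$ that differs from $\sigma_{k-1}$ only on two short transition subintervals (one on each side of the current break), and on each such subinterval is the unique Hermite polynomial of degree $r$ matching $\sigma_{k-1}$ to order $k-1$ on the left and to order $k-1$ on the right. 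This raises the global smoothness by one order per stage, so that $\sigma_r\in\Sspl{r}{\bx{d}}$. The boundary conditions at $x_0=0$ and $x_d=1$ (matching $\p_1$ and $\p_2$ to order $r$) are preserved because the smoothing stages never touch the outermost knots, and the initial plateaus on which $\sigma_0$ equals $0$ or $q$ are long enough (using that $d=2r^2 \ge 2r+2$) to absorb all $r$ expansions.

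The main obstacle is verifying that the "between" property survives each of the $r$ smoothing steps. At stage $k$ the Hermite blend on a transition subinterval of length $h=1/(2r^2)$ may overshoot the two adjacent pieces, and this overshoot must not push $\sigma_k$ outside the strip between $0$ and $q$. This is exactly where \lem{markov} is used: the deviation of the blend from its two boundary polynomials is controlled by $h$ times the size of the jumps in the lower derivatives, and Markov's constant $2r^2$ on the unit interval gives precisely the exchange rate $h\cdot(2r^2)=1$, allowing one stage of smoothing to be compensated per $1/(2r^2)$-length of buffer. Summing over the $r$ stages requires $r$ buffers, accommodated by having $d=2r^2$ knots; fewer knots would not suffice to keep the blend within the strip. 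Once the inductive control is established, the output $s:=\p_1+\sigma_r$ satisfies (i) and (ii).
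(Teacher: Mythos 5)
The paper offers no proof of this lemma; it is imported verbatim from Beatson \cite[Lemma~3.2]{Be}, so any argument here is necessarily your own. As written, your plan has gaps that are structural rather than technical. The decisive one is that property (i) is an \emph{exact} constraint wherever the two polynomials meet: if $\p_1(x^*)=\p_2(x^*)$ for some $x^*\in(a,b)$, then (i) forces $s(x^*)=\p_1(x^*)$, i.e.\ in your additive reduction $\sigma$ must vanish exactly at every zero of $q=\p_2-\p_1$ in $[a,b]$, where the admissible strip has width zero. Your $\sigma_0$ satisfies this, but a Hermite blend on a transition subinterval containing such a zero will not vanish there in general, and no bound on the size of the overshoot --- which is all \lem{markov} can deliver --- is of any use, because what is required is exact vanishing, not smallness. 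The same defect appears at the edges of the plateaus: where $\sigma_k$ leaves the region on which it is identically $0$ it must remain between $0$ and $q$, a one-sided constraint with zero slack, and a blend with a zero of odd order at that edge changes sign there. Since the knot sequence is arbitrary, you cannot in general relocate the transition region to dodge the zeros of $q$. A correct construction must build these degeneracies into the spline, which is essentially what Beatson's induction does.

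Two further points would need repair even away from the degeneracies. The Hermite step is ill-posed: matching $\sigma_{k-1}$ to order $k-1$ at both ends of a subinterval imposes $2k$ conditions, so the degree-$r$ interpolant is not unique when $2k<r+1$ and generically fails to exist when $2k>r+1$; in particular the last stages $k$ near $r$ are overdetermined for every $r\ge2$. And the cascade's bookkeeping is inconsistent: if $\sigma_k$ differs from $\sigma_{k-1}$ only on two subintervals flanking the current break, the break itself is untouched and gains no smoothness; if instead each transition subinterval swallows a break, you trade one rough break for two smoother ones and must then track how many breakpoints accumulate over $r$ stages and why $2r^2$ knots suffice. The assertion that $h\cdot 2r^2=1$ is ``precisely the exchange rate'' is numerology rather than an estimate. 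The underlying intuition --- one order of smoothness costs $O(r)$ knots, hence $d=2r^2$ in total --- is the right one, but the proposal does not convert it into a proof.
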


\begin{lemma}\label{glue}
Let $r\in\N$,  $S\in\Spl{r}{\{-1,0, 1\}}$ (\ie $S$ is a ppf of degree $\le r$ on
$[-1,1]$ with the only breakpoint at $0$), and assume that $S$ is non-negative on $[-1,1]$.
Suppose that $d :=2r^2$, and a knot sequence $\bz{2d+1}  := (z_i)_{i=0}^{2d+1}$, $-1 < z_0<z_1<\dots<z_{2d+1} < 1$ is such that
$z_d \leq 0 \leq z_{d+1}$ (in other words, there are $d+1$ knots to the left and to the right of zero).
Then, there exists  $\TS\in\Sspl{r}{\bz{2d+1}}$ satisfying
\begin{itemize}
\item[(i)] $\TS \geq 0$ on $[-1,1]$,
\item[(ii)] $\TS(x)  =  S(x)$ for all   $x\not\in[z_0, z_{2d+1}]$,
\item[(iii)] $\norm{S-\TS}{\infty}\le 2
\norm{p_1-p_2}{\infty}$,
where   $p_1:=S|_{[-1,0)}$,
$p_2:=S|_{(0,1]}$ are the polynomial pieces of $S$.
 \end{itemize}
\end{lemma}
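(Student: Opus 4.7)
The plan is to splice $p_1$ on the left and $p_2$ on the right through an intermediate polynomial $q\in\Poly_r$ chosen so that $q\ge 0$ on $[-1,1]$; two independent applications of \lem{beatson} on the two halves of the knot set will then glue the three pieces together into the desired smooth nonnegative interpolant. The key choice is $q(x):=p_1(x)+\norm{p_1-p_2}{\infty}$, and I claim $q\ge 0$ throughout $[-1,1]$. On $[-1,0]$ one has $p_1=S\ge 0$, so $q\ge\norm{p_1-p_2}{\infty}\ge 0$. On $[0,1]$, since $p_2=S\ge 0$ there and $|p_1(x)-p_2(x)|\le\norm{p_1-p_2}{\infty}$, we get $p_1(x)\ge -\norm{p_1-p_2}{\infty}$, hence $q(x)\ge 0$ as well.

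Next, I apply \lem{beatson} with the $d+1$ left-side knots $z_0,\dots,z_d$ to obtain a spline $\sigma_1$ that equals $p_1$ for $x\le z_0$, equals $q$ for $x\ge z_d$, and takes values between $p_1(x)$ and $q(x)$ on $[z_0,z_d]$. A second application with the $d+1$ right-side knots $z_{d+1},\dots,z_{2d+1}$ produces a spline $\sigma_2$ that equals $q$ for $x\le z_{d+1}$, equals $p_2$ for $x\ge z_{2d+1}$, and lies between $q(x)$ and $p_2(x)$ on $[z_{d+1},z_{2d+1}]$. Define $\TS:=\sigma_1$ on $(-\infty,z_d]$, $\TS:=q$ on $[z_d,z_{d+1}]$, and $\TS:=\sigma_2$ on $[z_{d+1},\infty)$. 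Because both $\sigma_i$ coincide with $q$ on $[z_d,z_{d+1}]$, the three pieces match smoothly and $\TS\in\Sspl{r}{\bz{2d+1}}$.

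Verification of (i), (ii), (iii) is then routine. For (ii), $\TS=p_1=S$ for $x\le z_0\le 0$ and $\TS=p_2=S$ for $x\ge z_{2d+1}\ge 0$. For (i), on $[z_0,z_d]\subset[-1,0]$ both $p_1$ and $q$ are $\ge 0$, so $\sigma_1\ge 0$; on $[z_d,z_{d+1}]$, $\TS=q\ge 0$; on $[z_{d+1},z_{2d+1}]\subset[0,1]$ both $q$ and $p_2$ are $\ge 0$, so $\sigma_2\ge 0$. For (iii), the error on $[z_0,z_d]$ is at most $|q-p_1|=\norm{p_1-p_2}{\infty}$, on $[z_d,z_{d+1}]$ at most $\max(|q-p_1|,|q-p_2|)\le 2\norm{p_1-p_2}{\infty}$, and symmetrically on $[z_{d+1},z_{2d+1}]$. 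There is no real obstacle here — the entire argument reduces to choosing $q$ so that the factor $2$ in (iii) exactly absorbs the constant lift $p_1\to q$; notably, the lower bound of \lem{positive_piece} is not invoked in proving this particular lemma, but only in the subsequent step where the size of the jump is to be controlled by the modulus of smoothness of $s$.
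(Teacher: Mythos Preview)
Your proof is correct and follows essentially the same route as the paper: the intermediate polynomial $q=p_1+\norm{p_1-p_2}{\infty}$ is exactly the paper's $\tilde p$, and the two applications of \lem{beatson} on the left and right knot groups are carried out identically. Your closing remark that \lem{positive_piece} plays no role in this lemma is also accurate.
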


\begin{proof} Let $\tilde p (x) := p_1(x) + \norm{p_1-p_2}{\infty}$. Note that
$\tilde p(x) \geq p_1(x) \geq 0$ for $x\in [-1,0]$, and
$\tilde p(x) \geq p_2 (x) \geq 0$ for $x\in [0,1]$.

We now use \lem{beatson} twice.
First, we ``glue" $p_1$ with $\tilde p$   using \lem{beatson} with $\p_1  = p_1$, $\p_2 =\tilde p$,
$\bx{d}  = (z_i)_{i=0}^d$, and $[a,b] = [z_0, z_d]$. Second, we ``glue" $\tilde p$ with $p_2$
using \lem{beatson} with $\p_1 = \tilde p$, $\p_2 = p_2$, $\bx{d} = (z_i)_{i=d+1}^{2d+1}$, and
$[a,b] = [z_{d+1}, z_{2d+1}]$. As a result, we get a spline $\TS \in\Sspl{r}{\bz{2d+1}}$ such that
$\tilde p \geq \TS  \geq p_1  \geq 0$ on $[z_0, z_d]$, $\tilde p \geq \TS  \geq p_2  \geq 0$ on $[z_{d+1}, z_{2d+1}]$,
$\TS(x)  =  S(x) \geq 0 $ for    $x\not\in[z_0, z_{2d+1}]$, and $\TS(x)  =  \tilde p(x) \geq 0$ for   $x \in[z_d, z_{d+1}]$.
Clearly,
\[
 \norm{S-\TS}{\infty} \leq \max_{i=1,2} \norm{\tilde p - p_i}{\infty} \leq 2 \norm{p_1-p_2}{\infty} \, ,
\]
and the proof is complete.
\end{proof}

The following lemma is the main tool for constrained smoothing.

\begin{lemma}\label{lemma3.6}
Let $q\in\N$, $r\in\N$. There exists a constant $\delta(q,r)>0$ such
that, for each ppf $s\in\Delta^q \cap
\C^{q-1}[-1,1]$ of degree $\le q+r$ with the only knot at
$0$ (\ie $s\in\Spl{q+r}{\{-1,0,1\}}$), and each partition $\bz{l}=
(z_i)_{i=0}^l$, $-1 = z_0 <z_1 < \dots <z_{l-1} < z_l = 1$ with
\[
\max_{0\leq i \leq {l-1}}(z_{i+1}-z_i)=:\delta<\delta(q,r) \,,
\]
 there exists a spline $\tilde
s\in\Delta^q \cap\Sspl{q+r}{\bz{l}}$ (\ie $\tilde s$ is a spline of minimal defect), such that
$\tilde s\equiv   s$ in  neighborhoods of $-1$ and   $1$,
and
\be\label{mainin}
\norm{s-\tilde s}{p}\le c(p,q,r) \omega_{q+r+1}(s,1)_p,
\ee
for all $0<p\le\infty$.
\end{lemma}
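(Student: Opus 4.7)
My plan is to pass to the $q$-th derivative. Set $S := s^{(q)}$ and $P_i := p_i^{(q)}$, where $p_1 := s|_{[-1,0]}$, $p_2 := s|_{[0,1]}$. Since $s\in\C^{q-1}\cap\Delta^q$, $s^{(q-1)}$ is continuous at $0$, and hence $S$ is a ppf of degree $\le r$ on $[-1,1]$ with only breakpoint at $0$, nonnegative on each of $[-1,0]$ and $[0,1]$; in particular \lem{glue} applies. The goal is to construct $\TS\in\Sspl{r}{\bz{l}}$ such that (i) $\TS\ge 0$ on $[-1,1]$, (ii) $\TS\equiv S$ outside a small set $E\subset(-1,1)$ containing $0$, and (iii) the $q$ moment-orthogonality conditions
\[
\int_{-1}^1(\TS-S)(t)\,t^k\,dt = 0,\qquad k = 0,1,\dots,q-1,
\]
are satisfied. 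Given such $\TS$, I take $\tilde s$ to be its $q$-fold antiderivative with initial data $\tilde s^{(j)}(-1) = s^{(j)}(-1)$, $j = 0,\dots,q-1$. Then automatically $\tilde s\in\Sspl{q+r}{\bz{l}}\cap\C^{q-1}[-1,1]\cap\Delta^q$; the matching at $-1$ yields $\tilde s\equiv s$ to the left of $E$; and the moment conditions force the polynomial $\tilde s - s$ (of degree $<q$) to the right of $E$ to vanish identically, so $\tilde s\equiv s$ near $+1$ as well. The estimate \ineq{mainin} reduces to $\|s-\tilde s\|_p\le c(q)\|\TS-S\|_p\le c(q,r)\|P_1-P_2\|_\infty$, which by \lem{klp} (applied with degree $q+r$ and $|J_j|=1$) plus a Taylor expansion of $P_1-P_2$ at $0$ is bounded by $c(p,q,r)\,\omega_{q+r+1}(s,1)_p$.

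Setting $d:=2r^2$, I apply \lem{glue} to $S$ using $2d+2$ consecutive knots of $\bz{l}$ straddling $0$, obtaining $\TS_0\in\Sspl{r}{\bz{l}}$ with $\TS_0\ge 0$ on $[-1,1]$, $\TS_0\equiv S$ outside a gluing zone $E_0$ of length at most $(2d+1)\delta$ around $0$, and $\|\TS_0-S\|_\infty\le 2\|P_1-P_2\|_\infty$. The initial moments $m_k^{(0)} := \int(\TS_0-S)(t)\,t^k\,dt$ then satisfy $|m_k^{(0)}|\le c(r)\,\delta^{k+1}\,\|P_1-P_2\|_\infty$ for $k = 0,\dots,q-1$.

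The hard part is to enforce (iii) without destroying (i), and this is the source of the smallness requirement on $\delta(q,r)$. I handle it by subtracting a spline correction $\eta\in\Sspl{r}{\bz{l}}$ supported in an interval where $S$ is bounded below. By \lem{positive_piece} applied to $S$ there is an interval $I\subset[-1,0]$ or $I\subset[0,1]$ of length $\ge 1/(4r^2)$ on which $S\ge c_1(r)\|P_1-P_2\|_\infty$. For $\delta$ smaller than some $\delta_1(r)$, a subinterval $I'\subset I$ of length $\ge 1/(8r^2)$ is disjoint from $E_0$, so $\TS_0\equiv S\ge c_1(r)\|P_1-P_2\|_\infty$ on $I'$. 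Since $\dim(\Sspl{r}{\bz{l}}|_{I'})$ is of order $1/\delta\gg q$ for small $\delta$, a standard $L^\infty$--$L^1$-duality argument (exploiting the norm-equivalence of coefficient and $L^\infty$ norms for polynomials of degree $<q$ on the fixed-length interval $I'$, combined with Chebyshev-type bounds comparing polynomial $L^\infty$-norms on $I'$ and on $E_0$) produces $\eta$ supported in $I'$ with $\int\eta(t)\,t^k\,dt = m_k^{(0)}$ for all $k$ and
\[
\|\eta\|_\infty\le c_2(q,r)\,\delta\,\|P_1-P_2\|_\infty.
\]
Choosing $\delta(q,r):=\min\{\delta_1(r),\,c_1(r)/(2c_2(q,r))\}$ makes $\|\eta\|_\infty\le(c_1(r)/2)\|P_1-P_2\|_\infty$, so $\TS:=\TS_0-\eta$ satisfies (i)--(iii) as required.
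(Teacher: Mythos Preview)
Your strategy differs from the paper's. Both begin by gluing $S=s^{(q)}$ across $0$ via \lem{glue} (your $\TS_0$) and integrating $q$ times, obtaining a spline $s_*$ that matches $s$ near $-1$ but in general not near $1$. The paper then repairs this by applying \lem{beatson} \emph{at the level of degree-$(q+r)$ splines}, transitioning from $s_*$ to $s$ on $I'\subset I$: since $\norm{s_*-s}{\infty}\le c(q,r)\,\delta\,\norm{P_1-P_2}{\infty}$ and the Beatson knots in $I'$ are chosen with spacing $\ge c(q,r)$, Markov's inequality on each piece bounds the $q$-th derivative of the glued spline within $c(q,r)\,\delta\,\norm{P_1-P_2}{\infty}$ of $s^{(q)}$, and $s^{(q)}\ge c_1(r)\norm{P_1-P_2}{\infty}$ on $I$ forces nonnegativity for small $\delta$. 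Your moment-matching route is a legitimate alternative that stays entirely at the $q$-th-derivative level.

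There is, however, a gap in your construction of $\eta$. The duality you invoke (Hahn--Banach on $p\mapsto\int_{E_0}(\TS_0-S)p$, viewed on $\Poly_{q-1}$ with the $L^1(I')$ norm) yields an $L^\infty$ \emph{function} on $I'$ with the prescribed moments and the claimed bound, but not a spline in $\Sspl{r}{\bz{l}}$; and knowing $\dim(\Sspl{r}{\bz{l}}|_{I'})\gg q$ says only that the moment map from compactly supported splines to $\R^q$ is onto, not that it admits a right inverse bounded in $L^\infty$ uniformly in $\delta$. One correct repair: pick $q$ disjoint subintervals $I_1,\dots,I_q\subset I'$ each of fixed length $|I'|/q$, build plateau splines $B_j\in\Sspl{r}{\bz{l}}$ with $0\le B_j\le 1$, $\supp B_j\subset I_j$, $\int B_j\ge c(q,r)$ (two uses of \lem{beatson}), and solve the $q\times q$ system $\sum_j c_j\int B_j(t)\, t^k\,dt=m_k^{(0)}$. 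The moment matrix is nonsingular (if $\sum_k a_k\int B_j t^k=0$ for every $j$ then $\sum_k a_k t^k\in\Poly_{q-1}$ has mean zero against each $B_j$, hence a root in each $I_j$, hence $q$ roots, hence vanishes), with inverse bounded by $c(q,r)$ via a compactness argument over probability measures supported in the $I_j$; since $\norm{B_j}{\infty}\le 1$ this gives $\norm{\eta}{\infty}\le c(q,r)\max_k|m_k^{(0)}|\le c(q,r)\,\delta\,\norm{P_1-P_2}{\infty}$, which is exactly the estimate you asserted.
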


\begin{proof}
Since $s^{(q-1)}$ is   absolutely continuous  and differentiable
everywhere except for $0$ (because it is a continuous ppf of degree
$\leq r+1$ with the only breakpoint at $0$), we conclude that
$S:=s^{(q)}$ is a non-negative ppf of degree $\le r$ with the only
knot at $0$. Let     $p_1:=S|_{[-1,0)}$, $p_2:=S|_{(0,1]}$ be the
polynomial pieces of $S$, and denote
 $\eta:=\norm{p_1-p_2}{\infty}$ and
$d:=2r^2$.
Lemma~\ref{positive_piece} implies that there exists an interval
 $I$ (without loss of generality we may assume that it is a subset
of $[0,1]$) such that $|I|\ge (2d)^{-1}$ and
\begin{equation}\label{from_below}
S(x)\ge c_1 \eta, \quad\text{for all }x\in I\,,
\end{equation}
where $c_1$ depends on $r$ only.

If    $P_1:=s|_{[-1,0)}$,
$P_2:=s|_{(0,1]}$ are the original  polynomial pieces of $s$, then $p_1 = P_1^{(q)}$,
$p_2 = P_2^{(q)}$, and using \lem{markov} as well as \lem{klp} we have
\begin{eqnarray*}
\eta &= & \norm{P_1^{(q)}-P_2^{(q)}}{\infty} \leq (q+r)^{2q} \norm{P_1 -P_2 }{\infty} \\
&\leq& c(p,q,r)  \norm{P_1 -P_2 }{p} \leq c(p,q,r) \omega_{q+r+1}(s,1)_p \, .
\end{eqnarray*}

Now, let $I'$ be the subinterval of $I$ with the same right endpoint as $I$ and length
$|I'|= (5d)^{-1}$, so that
\[
I'\cap [-(5d)^{-1},(5d)^{-1}]=\emptyset.
\]
Suppose now that the index $\sti$ is such that $z_{\sti} \leq 0 < z_{\sti+1}$, and that
 $\delta <  (5d(d+2))^{-1}$. Then  each of the intervals
$[-(5d)^{-1},0]$ and $[0,(5d)^{-1}]$ contains at least $d+1$ points from $\bz{l}$:
\[
(z_i)_{i=\sti-d}^{\sti} \subset [-(5d)^{-1},0] \quad \mbox{\rm and} \quad
(z_i)_{i=\sti+1}^{\sti + d+1} \subset [0,(5d)^{-1}] \, .
\]
We can apply \lem{glue} to construct a non-negative spline $\TS$ of degree $r$ and highest smoothness ($\TS\in\C^{r-1}$)
having knots
$(z_i)_{i=\sti-d}^{\sti+d+1}$ only, coinciding with $S$ outside $\sJ := [z_{\sti-d}, z_{\sti + d+ 1}]$,
and such that
\[
\norm{S-\TS}{\infty}\le 2 \eta \, .
\]
We define  $\ints$ to be such that $\ints^{(q)} \equiv \TS$ and $\ints^{(\nu)}(-1) = s^{(\nu)}(-1)$ for
all $0\leq \nu\leq q-1$. Therefore,
\[
\ints(x):=\sum_{\nu=0}^{q-1}\frac{s^{(\nu)}(-1)}{\nu!}(x+1)^\nu + \frac{1}{(q-1)!} \int_{-1}^x (x-t)^{q-1}\TS(t)\,dt \,.
\]
The spline $\ints$ is almost what we need. Namely, it is in $\Delta^q \cap\Sspl{q+r}{\bz{l}}$, it coincides with $s$ in a neighborhood of $-1$,
and
since $S$ and $\TS$ may only differ on an interval $\sJ$ of length $|\sJ| \leq (2d+1)\delta$, we have
\begin{eqnarray*}
\norm{s-\ints}{\infty} &\leq& \norm{\frac{1}{(q-1)!} \int_{-1}^x (x-t)^{q-1}\left( S(t)-\TS(t)\right) \,dt}{\infty} \\
& \leq & \frac{2^q \eta}{(q-1)!} |\sJ| \leq 2^q (2d+1) \eta \delta =: c_2 \eta \delta\, .
\end{eqnarray*}
The only property that $\ints$ does not have is that it may not be coinciding with $s$ in a neighborhood of $1$.
To remedy this, we use \lem{beatson} and ``glue" $\ints$ to $s$ on the interval $I'$ preserving its properties.

Let $[a,b] := I'$ and recall that $b-a = (5d)^{-1}$. Now, let  $d_1:=2(q+r)^2$ and take $\delta < h:= \frac{b-a}{2(d_1+1)} = \frac{1}{10d(d_1+1)}$. Then, each interval $[a+jh, a+(j+1)h]$, $0\leq j\leq 2d_1+1$, contains at least one point $z_{i_j}$ from $\bz{l}$, and
we set $\bx{d_1} = (x_j)_{j=0}^{d_1}$, where $x_j := z_{i_{(2j)}}$, $0\leq j\leq d_1$. Note that
\[
\max_{0\leq j\leq d_1}(x_{j+1}-x_j) \le 3 h
\quad\text{and}\quad
\min_{0\leq j\leq d_1}(x_{j+1}-x_j) \ge h\, .
\]
Taking into account that both $s$ and $\ints$ are polynomials of degree $\leq q+r$ on $I'$, \lem{beatson} implies that there exists a spline $\sbe \in \Sspl{q+r}{\bz{l}}$ such that the only knots of $\sbe$ inside $I'$ are $x_j$, $0\leq j\leq d_1$,
$\sbe(x)$ is a number between $s(x)$ and $\ints(x)$ for $x\in [x_0, x_{d_1}] \subset I'$ and $\sbe$ coincides with $\ints$ on
$[-1, x_0]$ and with $s$ on $[x_{d_1}, 1]$.
Hence,
\[
\norm{s-\sbe}{\infty} \leq \norm{s-\ints}{\infty} \leq c_2  \eta \delta \, ,
\]
 (so that \ineq{mainin} immediately follows), and it only remains to verify that $\sbe \in \Delta^q[x_0, x_{d_1}]$.
To this end, it suffices to show that $\sbe^{(q)}(x) \geq 0$ for all $x\in J_j := [x_j, x_{j+1}]$, $0\leq j\leq d_1-1$.
 Let such an interval $J_j$ be fixed.
Using \lem{markov} and the fact that $\sbe -s$ is a polynomial of degree $\leq q+r$ on $J_j$ we have
\begin{eqnarray*}
 \norm{\sbe^{(q)} - s^{(q)}}{\C(J_j)}  &\leq&  \left( \frac{2(q+r)^2}{|J_j|} \right)^q \norm{s-\sbe}{\C(J_j)}
 \leq \left( \frac{2(q+r)^2}{h} \right)^q c_2 \eta \delta  \\
 & = & \left( 10 d d_1 (d_1+1)\right)^q c_2 \eta \delta =: c_3 \eta \delta \,,
 \end{eqnarray*}
 where $c_3$ depends only on $r$ and $q$.
 Therefore, for every $x\in J_j$, using \ineq{from_below} we have
 \[
 \sbe^{(q)}(x) \geq s^{(q)}(x) - c_3 \eta \delta \geq c_1 \eta - c_3 \eta \delta \geq 0
 \]
provided $\delta \leq c_1/c_3$. Combining the above restrictions on $\delta$ we see that it is possible to
take
\[
\delta(q,r) := \min\left\{  (10 d (d_1+1))^{-1}, c_1/c_3   \right\}\,,
\]
and the construction of $\sbe$ is complete.
\end{proof}

\begin{corollary} \label{newcor}
Let $q\in\N$, $r\in\N$. There there exists a constant
$\delta(q,r)>0$ such that, for each ppf 
 $s\in\Delta^q \cap \C^{q-1}[a,b]$ be
a ppf of degree $\le q+r$ with the only knot at $c \in (a,b)$ (\ie
$s\in\Spl{q+r}{\{a,c,b\}}$), and each
partition  $\bz{l}=
(z_i)_{i=0}^l$, $a = z_0 <z_1 < \dots <z_{l-1} < z_l = b$ with
\[
\max_{0\leq i \leq {l-1}}(z_{i+1}-z_i) < \delta(q,r) \min\{b-c, c-a\} \,,
\]
 there exists a spline $\tilde
s\in\Delta^q \cap\Sspl{q+r}{\bz{l}}$ (\ie $\tilde s$ is a spline of minimal defect), such that
$\tilde s\equiv   s$ in  neighborhoods of $a$ and   $b$,
and
\[
\norm{s-\tilde s}{\Lp[a,b]}\le c(p,q,r) \omega_{q+r+1}(s,[a,b])_p,
\]
for all $0<p\le\infty$.
\end{corollary}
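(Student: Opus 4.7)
The plan is to reduce Corollary~\ref{newcor} to Lemma~\ref{lemma3.6} by an affine rescaling centered at~$c$. Let $h := \min\{c-a,\,b-c\}$ and consider the affine bijection $T:[-1,1]\to[c-h,c+h]$ given by $T(x):=c+hx$, so that $T(0)=c$ and $[c-h,c+h]\subset[a,b]$. The pullback $\hat s:=s\circ T$ is a ppf of degree $\le q+r$ on $[-1,1]$ whose only knot lies at~$0$, and it belongs to $\Delta^q[-1,1]\cap\C^{q-1}[-1,1]$ since affine changes of variables preserve $q$-monotonicity and smoothness. I would then form the pulled-back partition $\hat\bz{l'}$ of $[-1,1]$ with interior points $T^{-1}(z_i)$ for $z_i\in\bz{l}\cap(c-h,c+h)$ and endpoints $\pm 1$; every gap of $\hat\bz{l'}$ is bounded by $\max_i(z_{i+1}-z_i)/h<\delta(q,r)\min\{c-a,b-c\}/h=\delta(q,r)$, where $\delta(q,r)$ is the constant furnished by Lemma~\ref{lemma3.6}.

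Next I would apply Lemma~\ref{lemma3.6} to $\hat s$ and $\hat\bz{l'}$ to obtain a spline $\tilde{\hat s}\in\Delta^q\cap\Sspl{q+r}{\hat\bz{l'}}$ that coincides with $\hat s$ in neighborhoods of $\pm 1$ and satisfies $\norm{\hat s-\tilde{\hat s}}{\Lp[-1,1]}\le c(p,q,r)\,\omega_{q+r+1}(\hat s,1)_p$. Pushing forward, I set $\tilde s:=\tilde{\hat s}\circ T^{-1}$ on $[c-h,c+h]$ and $\tilde s:=s$ on $[a,b]\setminus[c-h,c+h]$. The coincidence $\tilde{\hat s}\equiv\hat s$ near $\pm 1$ forces $\tilde s\equiv s$ in two-sided neighborhoods of $c\pm h$; since $s$ equals a single polynomial in each such neighborhood (its only knot being $c$), $\tilde s$ is $\C^\infty$ across the points $c\pm h$. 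Combined with the $\C^{q+r-1}$ smoothness of $\tilde{\hat s}$ on $(-1,1)$ and the fact that $\Delta^q$-monotonicity glues together across the smooth junctions in a compatible direction, this produces $\tilde s\in\Sspl{q+r}{\bz{l}}\cap\Delta^q$ with $\tilde s\equiv s$ in neighborhoods of $a$ and~$b$.

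For the error estimate, the substitution $x=T(y)$ gives
\[
\norm{\tilde s-s}{\Lp[a,b]}=\norm{\tilde s-s}{\Lp[c-h,c+h]}=h^{1/p}\norm{\tilde{\hat s}-\hat s}{\Lp[-1,1]},
\]
and the analogous substitution in the definition of the modulus yields
\[
\omega_{q+r+1}(\hat s,1)_p\le h^{-1/p}\,\omega_{q+r+1}(s,h,[c-h,c+h])_p\le h^{-1/p}\,\omega_{q+r+1}(s,[a,b])_p,
\]
the last step using monotonicity of the modulus under enlargement of the domain. Multiplying these two bounds and invoking Lemma~\ref{lemma3.6} produces $\norm{\tilde s-s}{\Lp[a,b]}\le c(p,q,r)\,\omega_{q+r+1}(s,[a,b])_p$, as required. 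The main subtle point is that the auxiliary endpoints $c\pm h$ introduced by the rescaling must not turn into actual knots of $\tilde s$; this is precisely what the ``$\tilde{\hat s}\equiv\hat s$ near $\pm 1$'' clause in Lemma~\ref{lemma3.6} guarantees, and it is the sole reason that the hypothesis $\max_i(z_{i+1}-z_i)<\delta(q,r)\min\{b-c,c-a\}$ (rather than $\delta(q,r)(b-a)$) appears in the statement.
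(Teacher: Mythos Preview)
Your proof is correct and follows essentially the same approach as the paper: rescale by an affine map centered at $c$ with scale equal to the shorter half (the paper fixes $h=b-c$ after assuming WLOG $c\ge(a+b)/2$, which is exactly your $h=\min\{c-a,b-c\}$), apply Lemma~\ref{lemma3.6} to the pullback, and extend by $s$ (equivalently, polynomially) outside $[c-h,c+h]$. You have in fact supplied more detail than the paper does, correctly handling the gap bound for the pulled-back partition, the $h^{1/p}$ scaling of the $\Lp$-norm and modulus, and the gluing of $q$-monotonicity across the $\C^\infty$ junctions at $c\pm h$.
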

\begin{proof}
Without loss of generality, assume $c\ge(b+a)/2$. We apply
Lemma~\ref{lemma3.6} to $s^*(x):=s(c+(b-c)x)$ and those knots
$(z_i-c)/(b-c)$ which belong to $[-1,1]$. Now, extending the
resulting ppf $\tilde s^*$ polynomially from $[-1,1]$ to all $\R$,
we can go back to the original interval $[a,b]$ using $\tilde s(t):=\tilde
s^*((t-c)/(b-c))$.
\end{proof}

\mbox{}

{\bf\em Proof of \thm{smoothing}}. It is sufficient to apply \cor{newcor} with $[a,b]
= \J_j$ and $c = z_j$, for each $1\leq j\leq n-1$. Construction of $\tilde s$ is now obvious. \endproof

\subsection{Convex and monotone spline smoothing: auxiliary results for the proof of \thm{c12}} \label{cms}

Recall that for a partition $\z:= (z_i)_{i=0}^n$, $a =: z_0 <z_1 < \dots <z_{n-1} < z_n := b$ of an interval $[a,b]$, $z_j :=a$, $j<0$, and
$z_j :=b$, $j>n$, and that $J_j=[z_j,z_{j+1}]$. The following lemma shows how a  piecewise polynomial convex function can be smoothed to be continuously differentiable without adding any extra knots and keeping the error small.

\begin{lemma}[Convex smoothing]\label{lemcms}
Let   $r\in\N_0$, $\z := (z_i)_{i=0}^n$, $a =: z_0 <z_1 < \dots <z_{n-1} < z_n := b$ be a partition of $[a,b]$, and let $s\in\Delta^2 \cap \Spl{r+2}{\z}$.
Then, there exists $\tilde s\in\Delta^2 \cap \Spl{r+2}{\z}\cap\C^1[a,b]$ such that, for any $1\leq j\leq n-1$ and all $0<p\le\infty$,
\be \label{con}
\norm{s-\tilde s}{\Lp[z_{j-1}, z_{j+1}]}\le c(p,r,\scale(\z)) \omega_{r+3}(s,[z_{j-2}, z_{j+2}])_p\, ,
\ee
and
\be\label{mon}
\norm{s'-\tilde s'}{\Lp[z_{j-1}, z_{j+1}]}\le c(p,r,\scale(\z)) \omega_{r+2}(s',[z_{j-2}, z_{j+2}])_p \, .
\ee
Moreover,
\be\label{last}
\tilde s^{(\nu)}(a) =s^{(\nu)}(a) \quad \mbox{\rm and}\quad \tilde s^{(\nu)}(b) =s^{(\nu)}(b)\,, \quad \nu=0,1\, .
\ee
\end{lemma}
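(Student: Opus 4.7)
The plan is to construct $\tilde s := s + \sum_{j=1}^{n-1}\psi_j$, where each $\psi_j$ is a small, local, piecewise-quadratic ``corner-rounding'' correction designed to kill the derivative jump of $s$ at the interior knot $z_j$. Since $s\in \Delta^2$ is automatically continuous on $[a,b]$, the only obstruction to $s\in C^1[a,b]$ is the collection of nonnegative derivative jumps $\eta_j := s'(z_j+)-s'(z_j-)\geq 0$, $1\le j\le n-1$ (nonnegativity comes from convexity). Each $\psi_j$ will be supported in $[z_{j-1},z_{j+1}]$ and chosen so that (a) $\psi_j$ is $C^1$ on $\mathbb{R}$ except at $z_j$, (b) the derivative jump of $\psi_j$ at $z_j$ equals $-\eta_j$, and (c) $\psi_j$ has nonnegative second derivative on each of the two pieces $J_{j-1}, J_j$.

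Concretely, I would take
\[
\psi_j(x) := A_j(x-z_{j-1})^2 \text{ on } J_{j-1},\qquad \psi_j(x) := B_j(x-z_{j+1})^2 \text{ on } J_j, \qquad \psi_j\equiv 0 \text{ elsewhere},
\]
with the nonnegative constants $A_j, B_j$ determined uniquely by continuity of $\psi_j$ at $z_j$ and the prescribed derivative jump there, giving
\[
A_j = \frac{\eta_j|J_j|}{2|J_{j-1}|(|J_{j-1}|+|J_j|)}\,, \qquad B_j = \frac{\eta_j|J_{j-1}|}{2|J_j|(|J_{j-1}|+|J_j|)}\,.
\]
The double zeroes at $z_{j-1}$ and $z_{j+1}$ give (a), and a short computation gives the pointwise bounds $\|\psi_j\|_\infty \le \eta_j \min(|J_{j-1}|,|J_j|)$ and $\|\psi_j'\|_\infty\le \eta_j$.

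Verifying the properties of $\tilde s$: $\tilde s\in \Spl{r+2}{\z}$ is immediate since each $\psi_j$ is a ppf of degree $2\le r+2$ on $\z$; $\tilde s\in C^1[a,b]$ follows because at each interior knot $z_j$ the jump of $\tilde s'$ equals $\eta_j+(-\eta_j)=0$ (no other $\psi_i$ has a kink at $z_j$); and $\tilde s\in \Delta^2$ follows from (c) together with $s''\ge 0$ on each polynomial piece: on $J_k$ one has $\tilde s''|_{J_k}=s''|_{J_k}+2B_k+2A_{k+1}\ge 0$ (with the convention $B_0=A_n=0$), while all interior-knot jumps of $\tilde s'$ are zero. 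The boundary conditions~\eqref{last} hold because $\psi_j$ vanishes to second order at both endpoints of its support, so $\psi_j^{(\nu)}(a)=\psi_j^{(\nu)}(b)=0$ for $\nu=0,1$ and every $j$. For the quantitative bounds I would invoke \lem{klp} twice: once applied to $s\in \Spl{r+2}{\z}$ with $k=1$, yielding $|J_j|^{1+1/p}\eta_j \le c\,\omega_{r+3}(s,[z_{j-1},z_{j+1}])_p$, and once to $s'\in \Spl{r+1}{\z}$ with $k=0$, yielding $|J_j|^{1/p}\eta_j \le c\,\omega_{r+2}(s',[z_{j-1},z_{j+1}])_p$. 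Plugging these into the pointwise bounds on $\psi_j, \psi_j'$ and using $\scale(\z)$ to compare neighboring $|J_i|$'s gives $\norm{\psi_j}{\Lp[a,b]} \le c\,\omega_{r+3}(s,[z_{j-1},z_{j+1}])_p$ and $\norm{\psi_j'}{\Lp[a,b]} \le c\,\omega_{r+2}(s',[z_{j-1},z_{j+1}])_p$. Since only $\psi_{j-1},\psi_j,\psi_{j+1}$ are nonzero on $[z_{j-1},z_{j+1}]$, summing the three contributions and using the monotonicity $\omega_k(f,I)_p\le \omega_k(f,I')_p$ for $I\subset I'$ yields \eqref{con} and \eqref{mon}.

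The main obstacle is preserving convexity: each individual $\psi_j$ has a \emph{concave} kink at $z_j$ of magnitude exactly $\eta_j$, which would normally destroy convexity when added to $s$. The construction succeeds only because this concave kink is chosen to cancel precisely the convex kink of $s$ at $z_j$, while the two quadratic pieces $A_j(x-z_{j-1})^2$ and $B_j(x-z_{j+1})^2$ have nonnegative second derivatives and can therefore only \emph{improve} the convexity of $s$ on each polynomial piece. Getting both of these conditions from a single local, compactly supported correction is what forces the specific form of $\psi_j$ above.
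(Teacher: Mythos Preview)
Your construction is exactly the one in the paper, merely indexed by knot rather than by interval: your $\psi_j|_{J_{j-1}}=A_j(x-z_{j-1})^2$ coincides with the paper's $a_{j-1}$ and your $\psi_j|_{J_j}=B_j(x-z_{j+1})^2$ coincides with the paper's $b_j$, so that on each $J_i$ one has $\tilde s=p_i+a_i+b_i$ in both arguments. The verification (convexity, $C^1$, boundary values, and the estimates via \lem{klp}) is likewise the same, and your write-up in fact spells out the ``straightforward'' checks that the paper leaves to the reader.
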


{\em Proof.} This lemma is actually a simpler version of \cite[Lemma
1]{LeShe02} (see also \cite{She-one}). It was not proved in
\cite{LeShe02} for all $p>0$, and the construction of $\tilde s$ was
more involved there (because $s$ was allowed to change convexity).
For completeness, we recall this construction from \cite{LeShe02}
adopting it to our case, and showing how the estimates can be
obtained for all $p>0$.

With $p_i:=s|_{J_i}$, $0\leq i\leq n-1$,  denote
\[
a_i(x) :=
\frac{(z_{i+2}-z_{i+1})\left( p'_{i+1}(z_{i+1}) - p'_{i}(z_{i+1})\right) }{2(z_{i+1}-z_i)(z_{i+2}-z_i)} (x-z_i)^2\,,
\quad 0\leq i\leq n-2 \,,
\]
and
\[
b_i(x) :=
\frac{(z_{i}-z_{i-1})\left( p_{i}'(z_{i}) - p_{i-1}'(z_{i})\right) }{2(z_{i+1}-z_i)(z_{i+1}-z_{i-1})} (x-z_{i+1})^2\,,
\quad 1\leq i\leq n-1 \,,
\]
and also set $a_{n-1}(x) \equiv 0$ and $b_0(x) \equiv 0$. Then, for $x\in [z_i, z_{i+1}]$, $0\leq i\leq n-1$, we define
\[
\tilde s (x) := p_i(x) + a_i(x) + b_i(x) \, .
\]
It is now straightforward to verify that $s$ is a continuously differentiable convex function satisfying \ineq{last}, and it remains to prove \ineq{con} and \ineq{mon}.
For $1\leq j\leq n-1$, we have
\begin{eqnarray*}
\lefteqn{\norm{s-\tilde s}{\Lp[z_{j-1}, z_{j+1}]}
  \le  2^{1/p} \max_{i=j-1,j}  \norm{a_i + b_i}{\Lp[z_{i}, z_{i+1}]} } \\
& \leq &   c(p,r,\scale(\z)) |J_i|^{1+1/p} \max_{i=j-1,j, j+1\, ;\, 1\leq i\leq n-1 }  \left| p_{i}'(z_{i}) - p_{i-1}'(z_{i}) \right| \\
&\le& c(p,r,\scale(\z)) \omega_{r+3}(s,[z_{j-2}, z_{j+2}])_p \, ,
\end{eqnarray*}
where the last inequality follows from \lem{klp}. Similarly, \lem{klp} implies
\begin{eqnarray*}
\lefteqn{\norm{s'-\tilde s'}{\Lp[z_{j-1}, z_{j+1}]}
  \le  2^{1/p} \max_{i=j-1,j}  \norm{a_i' + b_i'}{\Lp[z_{i}, z_{i+1}]} } \\
& \leq &   c(p,r,\scale(\z)) |J_i|^{1/p} \max_{i=j-1,j, j+1\, ;\, 1\leq i\leq n-1 }  \left| p_{i}'(z_{i}) - p_{i-1}'(z_{i}) \right| \\
&\le& c(p,r,\scale(\z)) \omega_{r+2}(s',[z_{j-2}, z_{j+2}])_p \,.
\qquad \endproof
\end{eqnarray*}

\lem{lemcms} immediately implies the following result for monotone spline smoothing.

 \begin{corollary}[Monotone smoothing] \label{corms}
Let   $r\in\N_0$, $\z := (z_i)_{i=0}^n$, $a =: z_0 <z_1 < \dots <z_{n-1} < z_n := b$ be a partition of $[a,b]$, and let $s\in\Delta^1 \cap \Spl{r+1}{\z}$.
Then, there exists $\tilde s\in\Delta^1 \cap \Spl{r+1}{\z}\cap\C[a,b]$ such that, for any $1\leq j\leq n-1$ and $0<p\le\infty$,
\[
\norm{s-\tilde s}{\Lp[z_{j-1}, z_{j+1}]}\le c(p,r,\scale(\z)) \omega_{r+2}(s,[z_{j-2}, z_{j+2}])_p\, .
\]
Moreover,
\[
\tilde s(a) =s(a) \quad \mbox{\rm and}\quad \tilde s(b) =s(b)\, .
\]
\end{corollary}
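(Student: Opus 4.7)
The plan is to reduce the monotone smoothing problem to the convex smoothing result of \lem{lemcms} by passing to an antiderivative.

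First, I would define $S(x) := \int_a^x s(t)\,dt$ for $x\in[a,b]$ and check that $S \in \Delta^2 \cap \Spl{r+2}{\z}$. Since $s$ is a ppf of degree $\le r+1$ with breakpoints exactly in $\z$, integration raises the degree by one and keeps the same knot set, so $S \in \Spl{r+2}{\z}$. Convexity follows because $s\in\Delta^1$ means $s$ is nondecreasing; in particular the one-sided limits at each interior knot satisfy $s(z_i-) \le s(z_i+)$, which is exactly the condition needed for $S$ (whose almost-everywhere derivative is $s$) to be convex on $[a,b]$.

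Next, I would apply \lem{lemcms} (with parameter $r$) to $S$ to produce $\tilde S \in \Delta^2 \cap \Spl{r+2}{\z} \cap \C^1[a,b]$ satisfying the derivative estimate \ineq{mon} and the boundary conditions \ineq{last}. I would then set
\[
\tilde s := \tilde S'.
\]
Because $\tilde S \in \Spl{r+2}{\z} \cap \C^1[a,b]$, differentiation gives $\tilde s \in \Spl{r+1}{\z} \cap \C[a,b]$. Convexity of $\tilde S$ translates into monotonicity of $\tilde s$, so $\tilde s \in \Delta^1$. The boundary conditions $\tilde S'(a)=S'(a)=s(a)$ and $\tilde S'(b)=S'(b)=s(b)$ from \ineq{last} give $\tilde s(a)=s(a)$ and $\tilde s(b)=s(b)$.

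Finally, the error estimate is immediate: since $s = S'$ and $\tilde s = \tilde S'$, the inequality \ineq{mon} of \lem{lemcms} applied to $S$ reads
\[
\norm{s-\tilde s}{\Lp[z_{j-1},z_{j+1}]} = \norm{S'-\tilde S'}{\Lp[z_{j-1},z_{j+1}]} \le c(p,r,\scale(\z))\, \omega_{r+2}(S',[z_{j-2},z_{j+2}])_p,
\]
and $\omega_{r+2}(S',\cdot)_p = \omega_{r+2}(s,\cdot)_p$, which is the required bound. There is no real obstacle here; the only point that deserves a line of justification is the verification that the antiderivative of a monotone ppf (which may have jump discontinuities at the knots) is genuinely convex on $[a,b]$, but this follows at once from the one-sided inequality on the knot values of $s$ implied by membership in $\Delta^1$.
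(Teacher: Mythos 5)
Your proposal is correct and is precisely the intended derivation: the paper states that \lem{lemcms} ``immediately implies'' \cor{corms}, and the mechanism is exactly your antiderivative reduction, with the estimate \ineq{mon} and the $\nu=1$ case of the boundary conditions \ineq{last} built into the lemma for this very purpose. The one point you flag as deserving justification (convexity of the antiderivative of a monotone ppf with possible jumps at the knots) is handled correctly.
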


In the case $n=2$, \ie when a ppf has only one breakpoint inside an interval $[a,b]$ we get the following corollaries for  convex and monotone spline smoothing.

\begin{corollary}  \label{cora}
Let   $r\in\N_0$, $\zz := (z_i)_{i=0}^2$, $a =: z_0 <z_1 < z_2 := b$ be a partition of $[a,b]$, and let $s\in\Delta^2 \cap \Spl{r+2}{\zz}$.
Then, there exists $\tilde s\in\Delta^2 \cap \Spl{r+2}{\zz}\cap\C^1[a,b]$ such that
\[
\norm{s-\tilde s}{\Lp[a, b]}\le c(p,r,\scale(\zz)) \omega_{r+3}(s,[a, b])_p
\]
for any $0<p\le\infty$. Moreover,
\[
\tilde s^{(\nu)}(a) =s^{(\nu)}(a) \quad \mbox{\rm and}\quad \tilde s^{(\nu)}(b) =s^{(\nu)}(b)\,, \quad \nu=0,1\, .
\]
\end{corollary}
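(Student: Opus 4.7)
The plan is to obtain this corollary as an immediate specialization of \lem{lemcms} to the case $n=2$. Applying that lemma to $s$ with the three-point partition $\zz=(z_0,z_1,z_2)$ directly produces a spline $\tilde s\in\Delta^2\cap\Spl{r+2}{\zz}\cap\C^1[a,b]$ whose boundary data agree with those of $s$ at both $a$ and $b$ up to the first derivative. This is exactly the content of \ineq{last} and therefore yields the ``Moreover'' clause of the corollary at no extra cost.

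For the $\Lp$-estimate, I would observe that the admissible range $1\le j\le n-1$ in \ineq{con} collapses to the single index $j=1$ when $n=2$. The estimate then reads
\[
\norm{s-\tilde s}{\Lp[z_0,z_2]}\le c(p,r,\scale(\zz))\,\omega_{r+3}(s,[z_{-1},z_3])_p.
\]
Invoking the convention recalled at the beginning of Section~3.2 that $z_j:=a$ for $j<0$ and $z_j:=b$ for $j>n$, we have $[z_{-1},z_3]=[z_0,z_2]=[a,b]$, so this is precisely the inequality we need.

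Consequently there is no substantive obstacle; the corollary is a direct reading of the $n=2$ case of \lem{lemcms}. One need not revisit the construction of $\tilde s$ via the pieces $a_i(x)$ and $b_i(x)$ from the proof of \lem{lemcms}; they simplify considerably here (for instance $a_1\equiv 0$ and $b_0\equiv 0$, so only the quadratic corrections $a_0$ and $b_1$ tied to the single jump $p_1'(z_1)-p_0'(z_1)$ survive), but the general formulas and estimates already cover the present scenario of a single interior knot without modification.
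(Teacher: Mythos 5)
Your proposal is correct and is exactly the paper's (implicit) argument: the corollary is stated as the $n=2$ specialization of \lem{lemcms}, where $j=1$ is the only admissible index, the convention $z_j:=a$ for $j<0$ and $z_j:=b$ for $j>n$ collapses $[z_{-1},z_3]$ to $[a,b]$, and the boundary interpolation is \ineq{last} verbatim. No further work is needed.
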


\begin{corollary}  \label{mora}
Let   $r\in\N_0$, $\zz := (z_i)_{i=0}^2$, $a =: z_0 <z_1 < z_2 := b$ be a partition of $[a,b]$, and let $s\in\Delta^1 \cap \Spl{r+1}{\zz}$.
Then, there exists $\tilde s\in\Delta^1 \cap \Spl{r+1}{\zz}\cap\C[a,b]$ such that
\[
\norm{s-\tilde s}{\Lp[a, b]}\le c(p,r,\scale(\zz)) \omega_{r+2}(s,[a, b])_p
\]
for any $0<p\le\infty$. Moreover,
$\tilde s(a) =s(a)$ and $\tilde s(b) =s(b)$.
\end{corollary}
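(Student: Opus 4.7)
The plan is to observe that Corollary \ref{mora} is essentially the specialization of Corollary \ref{corms} to the case $n = 2$, so the proof is a direct invocation of that corollary (with appropriate boundary-index conventions) rather than any new construction.

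Concretely, I would apply Corollary \ref{corms} with the two-interval partition $\zz = (z_0, z_1, z_2)$. Since $s \in \Delta^1 \cap \Spl{r+1}{\zz}$, the corollary produces $\tilde s \in \Delta^1 \cap \Spl{r+1}{\zz} \cap \C[a,b]$ with $\tilde s(a) = s(a)$ and $\tilde s(b) = s(b)$, which already gives all the qualitative conclusions of \cor{mora}. For the estimate, Corollary \ref{corms} must be applied at the only admissible interior index $j = 1$, yielding
\[
\norm{s-\tilde s}{\Lp[z_0,z_2]}
\le c(p,r,\scale(\zz))\, \omega_{r+2}(s,[z_{-1},z_3])_p.
\]
Invoking the extension convention $z_j := a$ for $j < 0$ and $z_j := b$ for $j > n$ (with $n=2$) collapses $[z_{-1}, z_3]$ to $[a,b]$, so the bound becomes the desired estimate
\[
\norm{s-\tilde s}{\Lp[a,b]} \le c(p,r,\scale(\zz))\, \omega_{r+2}(s,[a,b])_p.
\]

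Since all the analytic content (bridging the jump in $s$ at the interior knot via the correction polynomials $a_i$, $b_i$ of \lem{lemcms}, and turning the convex-smoothing Lemma \ref{lemcms} into the monotone-smoothing Corollary \ref{corms}) has already been carried out earlier in the paper, there is no real obstacle here. The one point to be careful about is simply to check that the boundary-index convention is invoked consistently so that no knot outside $[a,b]$ appears in the error estimate; once that is verified, the statement is immediate, and this is precisely the monotone counterpart of the one-knot convex smoothing recorded in \cor{cora}.
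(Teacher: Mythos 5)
Your proposal is correct and matches the paper's own route: the paper presents Corollary~\ref{mora} (together with Corollary~\ref{cora}) as an immediate specialization of Corollary~\ref{corms} to the case $n=2$, exactly as you do by taking $j=1$ and using the convention $z_j:=a$ for $j<0$, $z_j:=b$ for $j>n$ to identify $[z_{-1},z_3]$ with $[a,b]$. No further argument is needed.
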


\mbox{}

{\bf\em Proof of \thm{c12}}. 
Let $\zzz_{3n}^\star$ be  a refinement of $\z$ obtained by adding two extra knots $l_i$ and $r_i$ in each interval
$J_i = [z_i, z_{i+1}]$, $0\leq i\leq n-1$, where $l_i := z_i + |J_i|/4$ and $r_i:= z_{i+1} - |J_i|/4$.
We now apply Corollaries~\ref{cora} and \ref{mora}
for every $1\leq i\leq n-1$, with 
$\zz = \left( r_{i-1}, z_i, l_i \right)$ to obtain $s^\star\in\Spl{q+r}{\zzz_{3n}^\star}\cap\Delta^q\cap\C^{q-1}[-1,1]$
satisfying
\[
\norm{s- s^\star}{\Lp[r_{i-1}, l_i]}
\le c(p,r) \omega_{q+r+1}(s,[r_{i-1}, l_i])_p\, ,\; 1\leq j\leq n-1\,.
\]
Moreover,   $s^\star \equiv s$ on $[l_i, r_i]$, $1\leq i\leq n-2$, and on $[-1, r_0]$ and $[l_{n-1}, 1]$,
and  therefore 
\be \label{tmpp}
\norm{s- s^\star}{\Lp(\J_j)}
\le c(p,r) \omega_{q+r+1}(s,\J_j)_p\, ,\; 0\leq j\leq n\,.
\ee
Now, using \thm{smoothing} with $s^\star$ and $\zzz_{3n}^\star$ instead of $s$ and $\z$, respectively, and 
taking into account that, if 
$\tbz{m}\in\refclass{\z}$, then $\tbz{m}\in\remesh{4\delta}{\zzz_{3n}^\star}$, we conclude that
\[
\norm{s^\star-\tilde s}{\Lp(\tilde\J_j)}
\le c(p,q,r) \omega_{q+r+1}(s^\star,\tilde\J_j)_p\, , \; 0\leq j\leq n  \,,
\]
where $\tilde\J_j$ stands for $[(l_{i-1}+r_{i-1})/2, (r_{i-1}+z_i)/2]$, 
$[(r_{i-1}+z_i)/2, (z_i+l_i)/2]$, or $[(z_i+l_i)/2, (l_i+r_i)/2]$,
and $l_i  = r_i := -1$ if $i<0$, and $l_i  = r_i := 1$ if $i\geq n$.
Finally, we notice that $(l_i+r_i)/2 = (z_i+z_{i+1})/2$ and, hence,
\[
\norm{s^\star-\tilde s}{\Lp(\J_j)}
\le c(p,q,r) \omega_{q+r+1}(s^\star,\J_j)_p\, , \; 0\leq j\leq n  \,,
\]
which together with \ineq{tmpp} completes the proof of the theorem.
\endproof

 \nocite{*}

\begin{bibsection}
\begin{biblist}

\bib{Be}{article}{
    author={Beatson, R. K.},
     title={Restricted range approximation by splines and variational
            inequalities},
   journal={SIAM J. Numer. Anal.},
    volume={19},
      date={1982},
    number={2},
     pages={372\ndash 380},
      issn={0036-1429},
    review={MR650057 (83d:41010)},
}

\bib{Be-81}{article}{
   author={Beatson, R. K.},
   title={Convex approximation by splines},
   journal={SIAM J. Math. Anal.},
   volume={12},
   date={1981},
   number={4},
   pages={549--559},
   issn={0036-1410},
   review={MR{617714 (82h:41012)}},
}

\bib{DeV-77}{article}{
   author={DeVore, R. A.},
   title={Monotone approximation by splines},
   journal={SIAM J. Math. Anal.},
   volume={8},
   date={1977},
   number={5},
   pages={891--905},
   issn={1095-7154},
   review={MR{0510725 (58 \#23259)}},
}

 \bib{DHL}{article}{
    author={DeVore, R. A.},
    author={Hu, Y. K.},
    author={Leviatan, D.},
     title={Convex polynomial and spline approximation in $L\sb p$, $(0<p<\infty)$},
   journal={Constr. Approx.},
    volume={12},
      date={1996},
    number={3},
     pages={409\ndash 422},
      issn={0176-4276},
    review={MR1405006 (97j:41008)},
}

 \bib{dly}{article}{
    author={DeVore, R. A.},
    author={Leviatan, D.},
    author={Yu, X. M.},
     title={Polynomial approximation in $L\sb p$ $(0<p<1)$},
   journal={Constr. Approx.},
    volume={8},
      date={1992},
    number={2},
     pages={187\ndash 201},
      issn={0176-4276},
    review={MR1152876 (93f:41011)},
}

\bib{Ko94}{article}{
   author={Kopotun, K. A.},
   title={Pointwise and uniform estimates for convex approximation of
   functions by algebraic polynomials},
   journal={Constr. Approx.},
   volume={10},
   date={1994},
   number={2},
   pages={153--178},
   issn={0176-4276},
   review={MR{1305916 (95k:41014)}},
}

\bib{Ko95}{article}{
    author={Kopotun, K. A.},
     title={On $K$-monotone polynomial and spline approximation in $L\sb p$,
            $0<p<\infty$ (quasi)norm},
 booktitle={Approximation theory VIII, Vol.\ 1 (College Station, TX, 1995)},
    series={Ser. Approx. Decompos.},
    volume={6},
     pages={295\ndash 302},
 publisher={World Sci. Publishing},
     place={River Edge, NJ},
      date={1995},
    review={MR1471742 (98f:41018)},
}

\bib{KoMC}{article}{
    author={Kopotun, K. A.},
     title={Univariate splines: equivalence of moduli of smoothness and applications},
   journal={Math. of Comp.},
   status={to appear},
}

\bib{KoLePr}{article}{
    author={Kopotun, K. A.},
    author={Leviatan, D.},
    author={Prymak, A. V.},
     title={Nearly monotone spline
approximation in $\Lp$},
   journal={Proc. AMS},
       volume={134},
      date={2006},
    number={7},
     pages={2037\ndash 2047},
}

\bib{LeMh}{article}{
    author={Leviatan, D.},
    author={Mhaskar, H. N.},
     title={The rate of monotone spline approximation in the $L\sb{p}$-norm},
   journal={SIAM J. Math. Anal.},
    volume={13},
      date={1982},
    number={5},
     pages={866\ndash 874},
      issn={0036-1410},
    review={MR668327 (83j:41014)},
}

\bib{LeShejat98}{article}{
   author={Leviatan, D.},
   author={Shevchuk, I. A.},
   title={Nearly comonotone approximation},
   journal={J. Approx. Theory},
   volume={95},
   date={1998},
   number={1},
   pages={53--81},
   issn={0021-9045},
   review={MR{1645976 (99j:41012)}},
}

\bib{LeShe02}{article}{
    author={Leviatan, D.},
    author={Shevchuk, I. A.},
     title={Coconvex approximation},
   journal={J. Approx. Theory},
    volume={118},
      date={2002},
    number={1},
     pages={20\ndash 65},
      issn={0021-9045},
    review={MR1928255 (2003f:41027)},
}

\bib{LeShejat03}{article}{
   author={Leviatan, D.},
   author={Shevchuk, I. A.},
   title={Coconvex polynomial approximation},
   journal={J. Approx. Theory},
   volume={121},
   date={2003},
   number={1},
   pages={100--118},
   issn={0021-9045},
   review={MR{1962998 (2004b:41018)}},
}

\bib{pp-book}{book}{
    author={Petrushev, P. P.},
    author={Popov, V. A.},
     title={Rational approximation of real functions},
    series={Encyclopedia of Mathematics and its Applications},
    volume={28},
 publisher={Cambridge University Press},
     place={Cambridge},
      date={1987},
     pages={xii+371},
      isbn={0-521-33107-2},
    review={MR940242 (89i:41022)},
}

\bib{She}{book}{
  author={Shevchuk, I. A.},
  title={Approximation by Polynomials and Traces of the Functions Continuous on an Interval},
  publisher={Naukova Dumka},
  place={Kyiv},
  date={1992},
}

\bib{She-one}{article}{
   author={Shevchuk, I. A.},
   title={One construction of cubic convex spline},
   booktitle={Approximation and optimization, Vol.\ I (Cluj-Napoca, 1996)},
   publisher={Transilvania, Cluj-Napoca},
   date={1997},
   pages={357--368},
   review={MR{1487120 (98k:41030)}},
}

\bib{Shv}{article}{
  author={{\v {S}}vedov, A. S.},
  title={Orders of coapproximation of functions by algebraic polynomials},
  language={Russian},
  journal={Mat. Zametki},
  volume={29},
  date={1981},
  number={1},
  pages={117\ndash 130, 156},
  issn={0025-567X},
  review={MR 82c:41009},
}

\end{biblist}
\end{bibsection}

\end{document}